\newtheorem{theorem}{Theorem}
\theoremstyle{plain}
\newtheorem{corollary}{Corollary}
\newtheorem{definition}{Definition}
\newtheorem{example}{Example}
\newtheorem{lemma}{Lemma}
\newtheorem{proposition}{Proposition}
\begin{document}

\title[Ordered Line\ \&\ Skew Field in Desargues Affine Plane]{Ordered Line
and Skew-Fields\\ in the Desargues Affine Plane}

\author[Orgest ZAKA]{Orgest ZAKA}
\address{Orgest ZAKA: Department of Mathematics, Faculty of Technical
Science, University of Vlora 'Ismail QEMALI', Vlora, Albania}
\email{orgest.zaka@univlora.edu.al, gertizaka@yahoo.com}

\author[James F. Peters]{James F. Peters}
\address{James F. PETERS: Computational Intelligence Laboratory, University
of Manitoba, WPG, MB, R3T 5V6, Canada and Department of Mathematics, Faculty
of Arts and Sciences, Ad\.{i}yaman University, 02040 Ad\.{i}yaman, Turkey}
\thanks{The research has been supported by the Natural Sciences \&
Engineering Research Council of Canada (NSERC) discovery grant 185986, Instituto Nazionale di Alta Matematica (INdAM) Francesco Severi, Gruppo Nazionale  per le Strutture Algebriche, Geometriche e Loro Applicazioni grant 9 920160 000362, n.prot U 2016/000036 and Scientific and Technological Research Council of Turkey (T\"{U}B\.{I}TAK) Scientific Human
Resources Development (BIDEB) under grant no: 2221-1059B211301223.}
\email{James.Peters3@umanitoba.ca}

\dedicatory{Dedicated to Girard Desargues and Emil Artin}

\subjclass[2010]{51-XX; 51Axx; 51A30; 51E15}

\begin{abstract}
This paper introduces ordered skew fields that result from the construction of a skew field over an ordered line in a Desargues affine plane. A special case of a finite ordered skew field in the construction of a skew field over an ordered line in a Desargues affine plane in Euclidean space, is also considered. Two main results are given in this paper: (1) every skew field constructed over a skew field over an ordered line in a Desargues affine plane is an ordered skew field and (2) every finite skew field constructed over a skew field over an ordered line in a Desargues affine plane in $\mathbb{R}^2$ is a finite ordered skew field.
\end{abstract}

\keywords{Affine Pappus Condition, Ordered Line, Ordered Skew-field, Desargues Affine Plane}

\maketitle

\section{Introduction}

The foundations for the study of the connections between axiomatic geometry and algebraic structures were set forth by D. Hilbert \cite{Hilbert1959geometry}, recently elaborated and extended in terms of the algebra of affine planes in, for example, \cite{Kryftis2015thesis}, [3, §IX.3, p.574].
E.Artin in \cite {11} shows that any ordering of a plane geometry is equivalent to a weak ordering of its skew field. He shows that that any ordering of a Desargues plane with more than four points is (canonically) equivalent to an ordering of its field.   In his paper on ordered geometries \cite{10}, P. Scherk considers the equivalence of an ordering of a Desarguesian affine plane with an ordering of its coordinatizing division ring.    Considerable work on ordered plane geometries has been done (see, {\em e.g.}, J. Lipman~\cite{7}, V.H. Keiser~\cite{Keiser1966finiteAffinePlane}, H. Tecklenburg~\cite{Tecklenburg1991EuclideanSpaceIsAPappianSpace} and  L. A. Thomas ~\cite{Thomas1977MscDesarguesAffineHjelmslevPlane,9}).

In this paper, we utilize a method that is naive and direct, without requiring the concept of coordinates. Our results are straightforward and constructive. For this reason we begin by giving a suitable definition for our search for lines in Desargues affine planes, based on the meaning of betweenness given by Hilbert~\cite[\S 3]{Hilbert1959geometry}.
%Our approach to ordered affine planes 
%We have modified the definition of ’order’ 
%given by V. Pambuccian~\cite{8} by introducing a simpler form with no-dependent axioms.

%We have modified the definition of ’order’ found in (\cite{7},\cite{8},\cite{9}, \cite{10},\cite{11},\cite{Hilbert1959geometry},\cite{14}), introducing a simpler form with no-dependent axioms.

%In \cite{3}, \cite{4}, we have shown how to construct a skew-field over a line in Desargues affine plane.    Let $\ell $ be a line in a Desargues affine plane $\mathcal{A}_{\mathcal{D}}=\left( \mathcal{P},\mathcal{L},\mathcal{I}\right) .$
%We consider the structure \ $\mathbf{K}= \left( \ell ,+,\ast \right) $, a skew-field constructed over the line $\ell$ in a Desargues affine plane $\mathcal{A}_{\mathcal{D}}.$   In previous work \cite{3}, we have shown how we can transform a line in a Desargues affine plane into an additive Group of its points. We have also shown \cite{4} how to construct a skew-field from a set of points on a line in the Desargues affine plane. In addition, for a line in any Desargues affine plane, we construct a skew-field with the points of this line, by appropriately defined addition and multiplication of points in a line.

In addition, we introduce ordered skew files that result from the construction of a skew field over an ordered line in a Desargues affine plane. A special case of a finite ordered skew field in the construction of a skew field over an ordered line in a Desargues affine plane in Euclidean space, is also considered. 

Based on the works of E.Artin \cite{11} and J. Lipman \cite{7} on ordered skew fields, we prove that the  skew field that the constructed  over an ordered line in a Desargues affine plane is a ordered skew field.
To prove this, we must also give the definition of ordered Desargues affine plane, based on the definition given by E.Artin \cite {11}, but in this case the ordered line is given a suitable definition without the use of \textit{coordinates}.

Two main results are given in this paper, namely, every skew field constructed over a skew field over an ordered line in a Desargues affine plane is an ordered skew field ~\ref{thm:OrderedSkewFieldDesargues}. and every finite skew field  constructed over a skew field over an ordered line in a Desargues affine plane in R2 is a finite ordered skew field ~\ref{cor:finiteOrderedSkewFieldR2}.

%\begin{figure}[!ht]
%\centering
%\includegraphics[width=45mm]{fig1}
%\begin{pspicture}
%[showgrid=true]
%(-0.5,-1.8)(4,3)
%\psline[linestyle=solid](0,-1.5)(0,2.5)\psline[linestyle=solid](2,-1.5)(2,2.5)
%\psline[linestyle=solid](4,-1.5)(4,2.5)
%\psline[linestyle=solid](0,-1)(2,0)(4,-1) % lower triangle
%\psline[linestyle=dotted, , linewidth=1.2pt,linecolor = red](0,1)(4,1) % upper dotted line
%\psline[linestyle=solid](0,1)(2,2)(4,1) % upper triangle
%\psline[linestyle=dotted, , linewidth=1.2pt,linecolor = red](0,-1)(4,-1) % lower dotted line
%\psdots[dotstyle=o, linewidth=1.2pt,linecolor = black, fillcolor = yellow]%
%(2,2)(2,0)
%\psdots[dotstyle=o, linewidth=1.2pt,linecolor = black, fillcolor = red]%
%(0,1)(0,-1)(4,1)(4,-1)
%\rput(0,2.7){$\boldsymbol{\ell_k}$}\rput(2,2.7){$\boldsymbol{\ell_l}$}\rput(4.0,2.7){$\boldsymbol{\ell_m}$}
%\rput(-0.25,1){$\boldsymbol{A'}$}\rput(2.25,2.25){$\boldsymbol{B'}$}\rput(4.25,1){$\boldsymbol{C'}$}%upper triangle
%\rput(-0.25,-1){$\boldsymbol{A}$}\rput(2.25,0.25){$\boldsymbol{B}$}\rput(4.25,-1){$\boldsymbol{C}$}%lower triangle
%\rput(1,-0.2){$\boldsymbol{\ell^{AB}}$}\rput(1,1.8){$\boldsymbol{\ell^{A'B'}}$}
%\rput(3,1.8){$\boldsymbol{\ell^{B'C'}}$}\rput(3,-0.2){$\boldsymbol{\ell^{BC}}$}
%\rput(2.45,-0.8){$\boldsymbol{\ell^{AC}}$}\rput(2.55,1.3){$\boldsymbol{\ell^{A'C'}}$}
%\end{pspicture}
%\caption[]{Desargues: $\boldsymbol{\ell^{AC}\parallel \ell^{A'C'}}$}
%\label{fig:1-path}
%\end{figure}

\section{Preliminaries}
Let $\mathcal{P}$ be a nonempty space, $\mathcal{L}$ a nonempty subset of $\mathcal{P}$.  The elements $p$ of $\mathcal{P}$ are points and an element $\ell$ of $\mathcal{L}$ is a line.   Collinear points on a line  $\mathcal{L}$ are denoted by $\left[A,E,B\right]$, where $E$ is between $A$ and $B$.   Given \emph{distinct} points $A,B$, there is a unique line $\ell^{AB}$ such that $A,B$ lie on $\ell^{AB}$ and we write $\ell^{AB} = A +B$~\cite[p. 52]{1}.   An {\bf affine space} is a vector space with the origin removed~\cite[\S 4.1, p. 391]{BaileyCameronConnelly2008affineSpace}.  The geometric structure $\left(\mathcal{P},\mathcal{L}\right)$ is an \emph{\bf affine plane}, a subspace of an affine space, provided\\

\begin{compactenum}[1$^o$]
\item For each $\left\{P,Q\right\}\in \mathcal{P}$, there is exactly one $\ell\in \mathcal{L}$ such that $\left\{P,Q\right\}\in \ell$.

\item For each $P\in \mathcal{P}, \ell\in \mathcal{L}, P \not\in \ell$, there is exactly one $\ell'\in \mathcal{L}$ such that
$P\in \ell'$ and $\ell\cap \ell' = \emptyset$\ (Playfair Parallel Axiom~\cite{Pickert1973PlayfairAxiom}).   Put another way,
if $P\not\in \ell$, then there is a unique line $\ell'$ on $P$ missing $\ell$~\cite{Prazmowska2004DemoMathDesparguesAxiom}.

\item There is a 3-subset $\left\{P,Q,R\right\}\in \mathcal{P}$, which is not a subset of any $\ell$ in the plane.   Put another way,
there exist three non-collinear points $\mathcal{P}$~\cite{Prazmowska2004DemoMathDesparguesAxiom}.
\end{compactenum}

An affine plane is a projective plane in which one line has been distinguished~\cite{Keiser1966finiteAffinePlane}.    For simplicity, our affine geometry is on the Euclidean plane $\mathbb{R}^2$ and incident lines $\ell, \ell'$ are represented by $\ell \cap \ell'\neq \emptyset$ (intersection).   A {\bf 0-plane} is a point, a {\bf 1-plane} a line containing a minimum of 2 collinear points and a {\bf 2-plane} is an affine plane containing a minimum of 4 points, no 3 of which are collinear.   An affine geometry is a geometry defined over vector spaces $V$, field $\mathbb{F}$ (vectors are points and subspaces of $V$) and subsets $\mathcal{P}$ (points), $\mathcal{L}$ (lines) and $\Pi$ (planes)~\cite[\S 7.5]{Dillon2018affineGeometry}.   

%A region of the affine plane (not including points at infinity) is called a \emph{\bf polygon}~\cite[p. 114]{Coxeter1993RealProjectivePlane}, provided the region is entirely bounded by line segments.   In effect, an affine polygon $A$ (denoted by $\diamond A$) is a collection of path-connected vertices along a polygon boundary.   And any polygon  can be decomposed into a finite number of triangles by drawing lines between opposite vertices of the polygon.    Coxeter polygons are examples of 2-planes.
%
%\begin{example}
%Polygons in an affine plane are 2-planes.
%\end{example}

\emph{\bf Desargues' Axiom, circa 1630}~\cite[\S 3.9, pp. 60-61] {Kryftis2015thesis}~\cite{Szmielew1981DesarguesAxiom}.   Let $A,B,C,A',B',C'\in \mathcal{P}$ and let pairwise distinct lines  $\ell_k , \ell_l, \ell_m, \ell^{AC}, \ell^{A'C'}\in \mathcal{L}$ such that
\begin{align*}
\ell_k \parallel \ell_l \parallel \ell_m &\ \mbox{and}\  \ell^{A}\parallel \ell^{A'}\ \mbox{and}\ \ell^{C}\parallel \ell^{C'}.\\
A,B\in \ell^{AB}, A'B'\in \ell^{A'B'}, &\ \mbox{and}\ B,C\in \ell^{BC}, B'C'\in \ell^{B'C'}.\\
A\neq C, A'\neq C', &\ \mbox{and}\ \ell^{AB}\neq \ell_{l}, \ell^{BC}\neq \ell_{l}.
\end{align*}

\begin{figure}[!ht]
\centering
\begin{pspicture}
%[showgrid=true]
(-0.5,-1.8)(4,3)
\psline[linestyle=solid](0,-1.5)(0,2.5)\psline[linestyle=solid](2,-1.5)(2,2.5)
\psline[linestyle=solid](4,-1.5)(4,2.5)
\psline[linestyle=solid](0,-1)(2,0)(4,-1) % lower triangle
\psline[linestyle=dotted, , linewidth=1.2pt,linecolor = red](0,1)(4,1) % upper dotted line
\psline[linestyle=solid](0,1)(2,2)(4,1) % upper triangle
\psline[linestyle=dotted, , linewidth=1.2pt,linecolor = red](0,-1)(4,-1) % lower dotted line
\psdots[dotstyle=o, linewidth=1.2pt,linecolor = black, fillcolor = yellow]%
(2,2)(2,0)
\psdots[dotstyle=o, linewidth=1.2pt,linecolor = black, fillcolor = red]%
(0,1)(0,-1)(4,1)(4,-1)
\rput(0,2.7){$\boldsymbol{\ell_k}$}\rput(2,2.7){$\boldsymbol{\ell_l}$}\rput(4.0,2.7){$\boldsymbol{\ell_m}$}
\rput(-0.25,1){$\boldsymbol{A'}$}\rput(2.25,2.25){$\boldsymbol{B'}$}\rput(4.25,1){$\boldsymbol{C'}$}%upper triangle
\rput(-0.25,-1){$\boldsymbol{A}$}\rput(2.25,0.25){$\boldsymbol{B}$}\rput(4.25,-1){$\boldsymbol{C}$}%lower triangle
\rput(1,-0.2){$\boldsymbol{\ell^{AB}}$}\rput(1,1.8){$\boldsymbol{\ell^{A'B'}}$}
\rput(3,1.8){$\boldsymbol{\ell^{B'C'}}$}\rput(3,-0.2){$\boldsymbol{\ell^{BC}}$}
\rput(2.45,-0.8){$\boldsymbol{\ell^{AC}}$}\rput(2.55,1.3){$\boldsymbol{\ell^{A'C'}}$}
\end{pspicture}
\caption[]{Desargues: $\boldsymbol{\ell^{AC}\parallel \ell^{A'C'}}$}
\label{fig:1-path}
\end{figure}

Then $\boldsymbol{\ell^{AC}\parallel \ell^{A'C'}}$.   \qquad \textcolor{blue}{$\blacksquare$}

%\begin{figure}[!ht]
%\centering
%\begin{pspicture}
%[showgrid=true]
%(-0.5,-2.5)(5,4)
%\psline[linestyle=solid](0,2)(5,4)\psline[linestyle=solid](0,0)(5,-2)
%\rput(5.3,4.2){$\boldsymbol{\ell^{EA}}$}\rput(5.3,-2.2){$\boldsymbol{\ell^{BD}}$}
%\psline[linestyle=solid](1,2.4)(4.0,-1.6)\psline[linestyle=solid](1,2.4)(2.5,-1.0)
%\psline[linestyle=solid](4.0,3.6)(1,-0.4)\psline[linestyle=solid](4.0,3.6)(2.5,-1.0)
%\psline[linestyle=solid](2.5,3.0)(1,-0.4)\psline[linestyle=solid](2.5,3.0)(4.0,-1.6) % middle line
%\psline[linestyle=dotted, , linewidth=1.2pt,linecolor = red](0.0,1.0)(5,1.0) % Pappus line
%\psdots[dotstyle=o, linewidth=1.2pt,linecolor = black, fillcolor = red]%
%(1.62,1.0)(2.05,1.0)(3.15,1.0)
%\rput(5.3,1.2){$\boldsymbol{\ell^{NM}\ \mbox{({\bf Pappian line}})}$} % Pappus line label
%\psline[linestyle=solid](1,2.4)(4.0,-1.6)\psline[linestyle=solid](1,2.4)(2.5,-1.0)
%\psdots[dotstyle=o, linewidth=1.2pt,linecolor = black, fillcolor = yellow]%
%(1,2.4)(1,-0.4)(2.5,3.0)(2.5,-1.0)(4.0,3.6)(4.0,-1.6)
%\rput(1,2.7){$\boldsymbol{E}$}\rput(2.5,3.3){$\boldsymbol{C}$}\rput(4.0,3.9){$\boldsymbol{A}$}
%\rput(1,-0.7){$\boldsymbol{B}$}\rput(2.5,-1.3){$\boldsymbol{F}$}\rput(4.0,-1.9){$\boldsymbol{D}$}
%\rput(1.3,1.2){$\boldsymbol{N}$}\rput(2.05,1.3){$\boldsymbol{L}$}\rput(3.5,1.2){$\boldsymbol{M}$}
%\end{pspicture}
%\caption[]{Pappian Line: $\boldsymbol{[NLM]\in \ell^{NM}}$}
%\label{fig:Pappus}
%\end{figure}

\begin{example}
The parallel lines  $\ell^{AC}, \ell^{A'C'}\in \mathcal{L}$ in Desargues' Axiom are represented in Fig.~\ref{fig:1-path}.  In other words, the base of $\bigtriangleup ABC$ is parallel with the base of $\bigtriangleup A'B'C'$, provided the restrictions on the points and lines in Desargues' Axiom are satisfied.
\qquad \textcolor{blue}{$\blacksquare$}
\end{example}

\noindent A {\bf Desargues affine plane} is an affine plane that satisfies Desargues' Axiom. 

\begin{theorem}\label{thm:Pappus}{\bf Pappus, circa 320 B.C.}{\rm ~\cite[\S 1.4, p. 18]{Berger2010geometryRevealed}}.\\
If $[ACE] \in \ell^{EA}$, $[BFD] \in \ell^{BD}$ and $\ell^{BD},\ell^{CD},\ell^{EF}$ meet $\ell^{DE},\ell^{FA},\ell^{BC}$,
then $[NLM]$ are collinear on $\ell^{NM}$.
\end{theorem}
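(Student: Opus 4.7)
The plan is to prove Pappus's theorem in the ambient setting of the Euclidean plane $\mathbb{R}^{2}$, exploiting the fact that the coordinate field here is the commutative field $\mathbb{R}$. Commutativity is essential: in a general Desarguesian affine plane, Pappus's theorem is equivalent to the coordinatizing skew field being a (commutative) field, so Desargues' axiom alone does not imply Pappus. The paper's restriction to $\mathbb{R}^{2}$ is exactly what bypasses this obstacle.

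The most direct route I would take is a coordinate proof. First, I would adapt coordinates to the configuration: place $\ell^{EA}$ as the $x$-axis with $A=(a_{1},0)$, $C=(c_{1},0)$, $E=(e_{1},0)$, and parameterize $B,F,D$ as three points on the second line $\ell^{BD}$ in a convenient affine form. Second, I would compute the three ``crossed'' intersection points $N = \ell^{AB}\cap \ell^{DE}$, $L = \ell^{BC}\cap \ell^{EF}$, and $M = \ell^{CD}\cap \ell^{FA}$ by solving the corresponding pairs of linear equations (each intersection exists because of the hypothesis that these lines meet). Third, I would verify collinearity by showing that the $3\times 3$ determinant whose rows are the homogeneous coordinates of $N,L,M$ vanishes identically; the cancellation that reduces the expression to $0$ uses commutativity of multiplication in $\mathbb{R}$ at precisely one step. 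Once $N,L,M$ are shown to be collinear, Axiom~$1^{o}$ of the affine plane produces the unique line $\ell^{NM}$ through $N$ and $M$, and $L$ lies on it.

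An alternative, more synthetic route would build several auxiliary triangles and apply Desargues' axiom (already available in the paper) repeatedly, in the spirit of the Hessenberg-type argument specialized to the commutative case. In that route, the main obstacle is the bookkeeping: one must select three pairs of triangles so that each pair is in perspective from a point or a line in a way compatible with the hypotheses $[ACE]$ and $[BFD]$, and then combine the three applications of Desargues in the right order to force the desired concurrence on $\ell^{NM}$. Finally, the betweenness assertion implicit in the notation $[NLM]$ is not automatic from collinearity; if it is required rather than merely $N,L,M\in \ell^{NM}$, it would follow from a separate order-theoretic argument on $\ell^{NM}$ using the betweenness structure for lines in $\mathbb{R}^{2}$ recalled in the preliminaries.
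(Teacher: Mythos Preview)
The paper does not prove this statement at all: Theorem~\ref{thm:Pappus} is recorded as a classical result with a citation to Berger and is immediately followed by a figure, an example, and Lane's affine Pappus condition, with no argument given. It functions purely as background, and the later results that actually rely on Pappian behaviour (Theorems~\ref{thm:Tecklenburg} and~\ref{thm:finiteOrderedDesarguesianAffinePlane}) are obtained by citing Tecklenburg and Wedderburn rather than by invoking any proof of Theorem~\ref{thm:Pappus} supplied in the paper.

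Your proposal therefore goes well beyond what the paper does. The coordinate argument you sketch for $\mathbb{R}^{2}$ is standard and would succeed; your observation that commutativity of the coordinate field is the crux is correct, and the ambient convention ``our affine geometry is on the Euclidean plane $\mathbb{R}^2$'' in the preliminaries makes that available. Two cautions, though. First, your ``alternative synthetic route'' via repeated applications of Desargues is not really a viable plan: Hessenberg's theorem runs in the opposite direction (Pappus $\Rightarrow$ Desargues), and there is no purely Desarguesian derivation of Pappus without an extra hypothesis tantamount to commutativity, so that paragraph should be dropped rather than presented as an option. Second, your remark about the bracket notation is well taken: in this paper $[A,E,B]$ genuinely encodes betweenness, yet the statement of Theorem~\ref{thm:Pappus} uses $[ACE]$, $[BFD]$, $[NLM]$ loosely for ``collinear'', and no order claim on $N,L,M$ is intended or needed downstream. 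You should read and prove only collinearity here; attempting to establish a specific betweenness $[N,L,M]$ would be both unnecessary and, in general position, false.
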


\begin{figure}[!ht]
\centering
\begin{pspicture}
%[showgrid=true]
(-0.5,-2.5)(5,4)
\psline[linestyle=solid](0,2)(5,4)\psline[linestyle=solid](0,0)(5,-2)
\rput(5.3,4.2){$\boldsymbol{\ell^{EA}}$}\rput(5.3,-2.2){$\boldsymbol{\ell^{BD}}$}
\psline[linestyle=solid](1,2.4)(4.0,-1.6)\psline[linestyle=solid](1,2.4)(2.5,-1.0)
\psline[linestyle=solid](4.0,3.6)(1,-0.4)\psline[linestyle=solid](4.0,3.6)(2.5,-1.0)
\psline[linestyle=solid](2.5,3.0)(1,-0.4)\psline[linestyle=solid](2.5,3.0)(4.0,-1.6) % middle line
\psline[linestyle=dotted, , linewidth=1.2pt,linecolor = red](0.0,1.0)(5,1.0) % Pappus line
\psdots[dotstyle=o, linewidth=1.2pt,linecolor = black, fillcolor = red]%
(1.62,1.0)(2.05,1.0)(3.15,1.0)
\rput(5.3,1.2){$\boldsymbol{\ell^{NM}\ \mbox{({\bf Pappian line}})}$} % Pappus line label
\psline[linestyle=solid](1,2.4)(4.0,-1.6)\psline[linestyle=solid](1,2.4)(2.5,-1.0)
\psdots[dotstyle=o, linewidth=1.2pt,linecolor = black, fillcolor = yellow]%
(1,2.4)(1,-0.4)(2.5,3.0)(2.5,-1.0)(4.0,3.6)(4.0,-1.6)
\rput(1,2.7){$\boldsymbol{E}$}\rput(2.5,3.3){$\boldsymbol{C}$}\rput(4.0,3.9){$\boldsymbol{A}$}
\rput(1,-0.7){$\boldsymbol{B}$}\rput(2.5,-1.3){$\boldsymbol{F}$}\rput(4.0,-1.9){$\boldsymbol{D}$}
\rput(1.3,1.2){$\boldsymbol{N}$}\rput(2.05,1.3){$\boldsymbol{L}$}\rput(3.5,1.2){$\boldsymbol{M}$}
\end{pspicture}
\caption[]{Pappian Line: $\boldsymbol{[NLM]\in \ell^{NM}}$}
\label{fig:Pappus}
\end{figure}

\begin{example}
The lines  $\ell^{BD},\ell^{CD},\ell^{EF}, \ell^{DE},\ell^{FA},\ell^{BC}$ in Pappus' Axiom are represented in Fig.~\ref{fig:Pappus}.  In that case, the points of intersection $[NLM]$ lie on the line $\ell^{NM}$.
\qquad \textcolor{blue}{$\blacksquare$}
\end{example}

The affine Pappus condition in Theorem~\ref{thm:Pappus} has an effective formulation relative to $[NLM]$ on line $\ell^{NM}$ given by N.D. Lane~\cite{Lane1967affinePappusCondition}, {\em i.e.},\\

{\bf Affine Pappus Condition}~\cite{Lane1967affinePappusCondition}.  Let $E,C,A,B,F,D$ be mutually distinct points as shown in Fig.~\ref{fig:Pappus} such that $A,B,C$ lie on $\ell^{EA}$ and $B,F,D$ lie on $\ell^{BD}$ and none of these points lie on $\ell^{EA}\cap \ell^{BD}, \ell^{BD}\cap \ell^{NM}$ or $\ell^{NM}\cap \ell^{EA}$.   Then
\[
\left.
\begin{array}{c}
\ell^{CB}\cap \ell^{EF}\ \mbox{lies on}\ \ell^{NM}\\ 
\ell^{AF}\cap \ell^{CD}\ \mbox{lies on}\ \ell^{NM}%
\end{array}
\right\} \Rightarrow \ell^{AB}\cap \ell^{ED}\ \mbox{lies on}\ \ell^{NM}. 
\]

\noindent This leads to the following result.

\begin{theorem}{\bf Affine Pappus Condition}{\rm ~\cite{Lane1967affinePappusCondition}}.
If the affine Pappus condition holds for all pairs of lines $\ell, \ell'$ such that $\ell \not\parallel \ell'$, then the affine Pappus condition holds for all pairs $\ell, \ell'$ with $\ell \parallel \ell'$.
\end{theorem}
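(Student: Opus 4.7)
The plan is to reduce the parallel case ($\ell \parallel \ell'$) to the already-assumed non-parallel case ($\ell \not\parallel \ell'$) by means of an auxiliary construction that destroys the parallelism while preserving the essential incidences on $\ell^{NM}$. To begin, I would fix the data of the parallel configuration: two parallel lines $\ell, \ell'$ with $[ECA]\subset\ell$ and $[BFD]\subset\ell'$, together with the hypotheses $N := \ell^{CB}\cap\ell^{EF}\in\ell^{NM}$ and $M := \ell^{AF}\cap\ell^{CD}\in\ell^{NM}$. The target conclusion is that $\ell^{AB}\cap\ell^{ED}\in\ell^{NM}$.

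Next, I would introduce a third line $\ell^\ast$ chosen to meet $\ell'$ (hence $\ell^\ast \not\parallel \ell'$) at a point off the configuration, and use a parallel projection of the triple $E,C,A$ from $\ell$ onto $\ell^\ast$ along a direction that avoids all existing lines of the figure. Writing $E^\ast,C^\ast,A^\ast$ for the images, I obtain a six-point configuration with carriers $\ell^\ast$ and $\ell'$ that are \emph{not} parallel. The Playfair Axiom and the Desargues Axiom (both already in force in the plane) guarantee that this projection exists, is bijective on the relevant lines, and sends concurrences of cross-lines to concurrences — in particular it maps the two hypothesis-intersections $N, M$ to two corresponding points $N^\ast, M^\ast$ lying on a well-defined auxiliary line $\ell^{N^\ast M^\ast}$. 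Applying the non-parallel affine Pappus condition to the starred configuration yields that $\ell^{A^\ast B}\cap\ell^{E^\ast D}$ lies on $\ell^{N^\ast M^\ast}$. I would then run the parallel projection in reverse, invoking Desargues once more, to pull this incidence back to the original configuration and conclude that $\ell^{AB}\cap\ell^{ED}\in\ell^{NM}$.

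The main obstacle will be the bookkeeping required to guarantee that the auxiliary configuration satisfies the non-degeneracy hypotheses demanded by the non-parallel affine Pappus condition: six mutually distinct points, none of which lies on $\ell^\ast\cap\ell'$, $\ell'\cap\ell^{N^\ast M^\ast}$, or $\ell^{N^\ast M^\ast}\cap\ell^\ast$. By choosing the direction of the projection generically (which is legitimate because the plane contains enough lines to avoid any finite list of forbidden directions, using axiom $3^o$ of the affine plane together with the Playfair Axiom), these degeneracies can be excluded. A secondary technical point is the verification that parallel projection commutes with the formation of $\ell^{NM}$ in the sense needed, but this is precisely a Desargues-type incidence statement and is handled by the Desargues Axiom stated in the preliminaries. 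Once these preparatory checks are complete, the transfer of the conclusion from the starred configuration to the original configuration is immediate.
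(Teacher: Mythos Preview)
The paper does not supply its own proof of this theorem: it is quoted verbatim as a result of N.\,D.~Lane and cited to~\cite{Lane1967affinePappusCondition}, with no argument given. So there is no in-paper proof to compare your proposal against.

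That said, your proposal has a genuine gap. A parallel projection $P_p:\ell\to\ell^\ast$ along a fixed direction is only a bijection \emph{between those two lines}; it is not a collineation of the whole plane. Consequently it does not ``send concurrences of cross-lines to concurrences'' in any functorial sense. When you replace $E,C,A$ by $E^\ast,C^\ast,A^\ast$, the new cross-lines $\ell^{C^\ast B},\ell^{E^\ast F},\ell^{A^\ast F},\ell^{C^\ast D}$ are simply different lines from the old ones, and the new intersection points $N^\ast,M^\ast$ bear no a~priori relation to $N,M$; in particular there is no map under which $\ell^{N^\ast M^\ast}$ is the ``image'' of $\ell^{NM}$. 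Your final step---``run the parallel projection in reverse \dots\ to pull this incidence back''---therefore has no content: there is nothing that takes the point $\ell^{A^\ast B}\cap\ell^{E^\ast D}$ (which lives off both $\ell$ and $\ell^\ast$) back to $\ell^{AB}\cap\ell^{ED}$, and invoking the Desargues Axiom does not repair this, since Desargues speaks only about a very specific triangle-and-parallels figure, not about transporting arbitrary incidences along a line-to-line projection.

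If you want to salvage the strategy, you would need an actual collineation of the plane (e.g.\ a shear or an affinity in the Desarguesian setting) rather than a mere parallel projection between two lines, and then verify that such a collineation carries the entire six-point Pappus figure---including $B,F,D$ and the line $\ell^{NM}$---to a non-parallel Pappus figure. Alternatively, consult Lane's original argument, which the paper defers to.
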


Every Desarguesian affine plane is isomorphic to a coordinate plane over a field~\cite{Wedderburn1987PappusDesarguesianAffinePlanes} and every finite field is commutative~\cite[\S 3, p. 351]{Wedderburn1905DesarguesianAffinePlane}.  From this, we obtain

\begin{theorem}\label{thm:Tecklenburg}{\rm [{\bf Tecklenburg}]}{\rm~\cite{Wedderburn1987PappusDesarguesianAffinePlanes}}.\\
Every finite Desarguesian affine plane is Pappian.
\end{theorem}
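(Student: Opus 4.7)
The plan is to chain together the two classical facts that the excerpt has just cited as preparation for this theorem, namely the coordinatization of a Desarguesian affine plane by a skew field and Wedderburn's little theorem on finite division rings. The logical skeleton is: Desarguesian $\Rightarrow$ coordinatized by a skew field $K$; finite plane $\Rightarrow$ $K$ is finite; $K$ finite $\Rightarrow$ $K$ is commutative (a field); commutativity of the coordinatizing ring $\Rightarrow$ the Pappus condition holds in the plane.

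First, I would invoke the coordinatization theorem quoted from Wedderburn~\cite{Wedderburn1987PappusDesarguesianAffinePlanes}: let $(\mathcal{P},\mathcal{L})$ be the given finite Desarguesian affine plane, pick any line $\ell_0\in\mathcal{L}$ and any two distinct points $O,E\in\ell_0$, and use the Desargues axiom (together with the parallel axiom) to transport the addition and multiplication defined on $\ell_0$ into a skew field structure $K=(\ell_0,+,\cdot,O,E)$ such that $(\mathcal{P},\mathcal{L})$ is isomorphic to the affine coordinate plane $\mathbb{A}^{2}(K)$. Since $\mathcal{P}$ is finite, $\ell_0\subseteq\mathcal{P}$ is finite, so $K$ is a finite skew field.

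Next I would apply Wedderburn's little theorem, cited in the excerpt as~\cite[\S 3, p.~351]{Wedderburn1905DesarguesianAffinePlane}, to conclude that $K$ is commutative, i.e.\ a field. The final step is to recall that the Pappus (affine) condition in $(\mathcal{P},\mathcal{L})$ is equivalent, via the coordinatization, to the commutativity of the multiplication in $K$: given six points $E,C,A,B,F,D$ configured as in the Affine Pappus Condition of Fig.~\ref{fig:Pappus}, one writes down the coordinates of $N=\ell^{CB}\cap\ell^{EF}$, $L=\ell^{AF}\cap\ell^{CD}$ and $M=\ell^{AB}\cap\ell^{ED}$ using affine combinations over $K$; the collinearity of $[NLM]$ on $\ell^{NM}$ reduces to a single identity $xy=yx$ in $K$. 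Because $K$ is commutative, this identity holds, so $[NLM]$ are collinear on $\ell^{NM}$ for every admissible configuration, and $(\mathcal{P},\mathcal{L})$ is Pappian.

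The only real obstacle is the passage from the commutativity of $K$ to the (affine) Pappus condition in the plane, since the coordinate computation is not entirely trivial; however, this equivalence is a standard consequence of the Hilbert/Artin coordinatization machinery referenced in the introduction via \cite{Hilbert1959geometry,11}, so I would simply cite it rather than carry out the affine-coordinate calculation in detail. Everything else is a direct substitution of hypotheses into named theorems. \qquad\textcolor{blue}{$\blacksquare$}
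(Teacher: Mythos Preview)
Your proposal is correct and follows exactly the approach the paper uses: the paper does not give a formal proof of Theorem~\ref{thm:Tecklenburg} but simply records, in the sentence preceding it, that coordinatization of a Desarguesian affine plane by a division ring together with Wedderburn's theorem on finite division rings yields the result, and then states the theorem with a citation. Your write-up is a faithful (and more detailed) expansion of that one-line justification; the only part you add beyond the paper is the explicit mention that commutativity of $K$ is equivalent to the affine Pappus condition, which the paper leaves implicit.
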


%We also obtain a stronger result.

%\begin{theorem}\label{thm:finiteOrderedDesarguesianAffinePlane}
%Every finite ordered Desarguesian affine plane over $\mathbb{R}^2$ is Pappian.
%\end{theorem} 
%\begin{proof}
%Let $\pi$ be a finite Desarguesian affine plane over $\mathbb{R}^2$.  From Theorem~\ref{thm:Tecklenburg}, $\pi$ is Pappian.   Since $\pi$ satisfies Axioms O1-O6, $\pi$ is a finite ordered Desarguesian affine plane.   It has been observed that every Euclidean space is Pappian~\cite[\S 1.1, p. 195]{Tecklenburg1991EuclideanSpaceIsAPappianSpace}.   Hence, $\pi$ is Pappian.
%\end{proof}
   
\section{Ordered Lines and Ordered Desargues Affine Plane}

An invariant way to describe an 'order' is by means of a ternary relation:
the point $B$ \ lies "between" $A$ and $C$. Hilbert has axiomatized this
ternary relation~\cite{Hilbert1959geometry}.  In
this section, we begin by giving a suitable definition for our search for lines in
Desargues affine plane, based on the meaning of \emph{betweenness} given by D.Hilbert, {\em i.e.},
if $B$ \ lies "between" $A$ and $C$, we mark it with $[A,B,C]$. 
%\ We have
%modified the definition of 'order' found in (\cite{7},\cite{9}, 
%\cite{10},\cite{11},\cite{Hilbert1959geometry},\cite{14}), bringing it into a simpler form,
%and with no-dependent axioms.

\begin{definition}\label{def:orderAxioms}
\bigskip An \textbf{ordered line} in a Desargues Affine plane (briefly, called 
the \emph{line}) satisfies the following axioms.
\begin{compactenum}[Lo.1]
\item For $A,B,C\in \ell ,$\textbf{\ }$\left[ A,B,C%
\right] $ $\Longrightarrow \left[ C,B,A\right] $ .

\item  For $A,B,C\in \ell $ are mutually distinct,
then we have exactly one, \ $\left[ A,B,C\right] $ , $\left[ B,C,A\right] $
or $\left[ C,A,B\right] $ .

\item For $A,B,C,D\in \ell $ , then $\left[ A,B,C%
\right] $ and $\left[ B,C,D\right] \Longrightarrow \left[ A,B,D\right] $ 
\textbf{and} $\left[ A,C,D\right] $ .

\item  For $A,B,C,D\in \ell $ , then $\left[ A,B,C%
\right] $ and $\left[ C,B,D\right] \Longrightarrow \left[ D,A,B\right] $ 
\textbf{or} $\left[ A,D,B\right] $ .
\end{compactenum}
\end{definition}

\begin{figure}[!ht]
\centering
\begin{pspicture}
%[showgrid=true]
(0.2,-1.)(7.,5.5)
\psline[linewidth=2.pt](1.,4.)(7.,4.)
\rput[tl](2.68,3.9){$B$}\rput[tl](4.72,3.9){$A$}\rput[tl](5.76,3.9){$C$}
\rput[tl](2.9,5.0){$D$}
\psline[linewidth=1.pt,linestyle=dotted]{->}(3.02,4.74)(2.,4.)
\psline[linewidth=1.pt,linestyle=dotted]{->}(3.02,4.74)(4.,4.)
\psline[linewidth=2.pt](1.,2.)(7.,2.)
\rput[tl](1.7,1.9){$B$}\rput[tl](2.7,1.9){$A$}\rput[tl](4.7,1.9){$C$}
\rput[tl](4.9,2.9){$D$}
\psline[linewidth=1.pt,linestyle=dotted]{->}(5.07,2.64)(4.,2.)
\psline[linewidth=1.pt,linestyle=dotted]{->}(5.07,2.64)(6.,2.)
\psline[linewidth=2.pt](1.,0.)(7.,0.)
\rput[tl](2.9,0.9){$B$}\rput[tl](4.7,-0.07){$A$}\rput[tl](2.7,-0.07){$C$}
\rput[tl](5.7,-0.07){$D$}
\psline[linewidth=1.pt,linestyle=dotted]{->}(3.01,0.6)(2.,0.)
\psline[linewidth=1.pt,linestyle=dotted]{->}(3.01,0.6)(4.,0.)
\rput[tl](0.42,4.68){$(a)$}
\rput[tl](0.42,2.58){$(b)$}
\rput[tl](0.5,0.72){$(c)$}
\begin{scriptsize}
\psdots[dotstyle=o, linewidth=1.2pt,linecolor = black, fillcolor = blue]%
(3.,4.)(5.,4.)(6.,4.)
\psdots[dotstyle=o, linewidth=1.2pt,linecolor = black, fillcolor = green]%
(4.,4.)(2.,4.)
\psdots[dotstyle=o, linewidth=1.2pt,linecolor = black, fillcolor = blue]%
(2.,2.)(3.,2.)(5.06,2.)
\psdots[dotstyle=o, linewidth=1.2pt,linecolor = black, fillcolor = green]%
(6.,2.)(4.,2.)
\psdots[dotstyle=o, linewidth=1.2pt,linecolor = black, fillcolor = blue]%
(3.,0.)(5.,0.)(6.,0.)
\psdots[dotstyle=o, linewidth=1.2pt,linecolor = black, fillcolor = green]%
(4.,0.)(2.,0.)
\end{scriptsize}
\end{pspicture}
\caption[]{Possibilities of doubles
combinations, of \ $\left[ B,A,C\right] ,$ $\left[ C,A,D\right] $ and $\left[
D,A,B\right] $}
\label{fig:figure.1}
\end{figure}

\begin{proposition}\label{Pr.1}
For all  $A,B,C,D\in \ell $ , two, of $\left[ B,A,C\right] ,$ $%
\left[ C,A,D\right] $ and $\left[ D,A,B\right] $ $\ $exclude the third.
\end{proposition}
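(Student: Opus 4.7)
The plan is to prove the contrapositive-flavored statement: if any two of $[B,A,C]$, $[C,A,D]$, $[D,A,B]$ hold, then the third must fail. Because the three relations are cyclically symmetric under the substitution $B\mapsto C\mapsto D\mapsto B$ (each axiom Lo.1--Lo.4 treats the point labels abstractly), it suffices to treat one case, say: assume $[B,A,C]$ and $[C,A,D]$ hold, and show that $[D,A,B]$ cannot hold.

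The key observation is that $A$ plays the role of the middle point in both $[B,A,C]$ and $[C,A,D]$, and the point $C$ is the shared endpoint. This is exactly the configuration triggering axiom Lo.4. Matching variables $(X,Y,Z,W)\mapsto(B,A,C,D)$ in the template $[X,Y,Z]\wedge[Z,Y,W]\Rightarrow [W,X,Y]\vee[X,W,Y]$ yields the dichotomy
\[
[D,B,A] \quad \text{or} \quad [B,D,A].
\]
Applying the reversal axiom Lo.1 to each alternative rewrites this as
\[
[A,B,D] \quad \text{or} \quad [A,D,B].
\]

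Next I would invoke axiom Lo.2 on the triple of (pairwise distinct) points $\{A,B,D\}$: exactly one of $[A,B,D]$, $[B,D,A]$, $[D,A,B]$ holds. In the first branch, $[A,B,D]$ already holds, so Lo.2 forbids $[D,A,B]$. In the second branch, $[A,D,B]$ holds, and Lo.1 rewrites it as $[B,D,A]$; again Lo.2 rules out $[D,A,B]$. In either branch the desired conclusion is reached, and the cyclic symmetry finishes the other two pairings.

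I do not anticipate a genuine obstacle here; the only care needed is bookkeeping to align the letters in Lo.4 correctly (choosing $C$ as the shared endpoint $Z$ and $A$ as the common middle $Y$) and to remember that Lo.1 allows us to reverse any betweenness triple freely before invoking Lo.2. The proof is a short syllogism: Lo.4 produces a two-branch disjunction, Lo.1 normalises both branches to share the ``middle point'' $B$ or $D$, and Lo.2 then rules out $[D,A,B]$ in both cases.
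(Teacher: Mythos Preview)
Your argument is correct and follows the same logical skeleton as the paper's proof: apply Lo.4 to obtain a two-branch disjunction, normalise with Lo.1, and then invoke Lo.2 on the remaining triple to rule out the third relation. The only difference is that you exploit the cyclic symmetry $B\mapsto C\mapsto D\mapsto B$ to reduce to a single case, whereas the paper writes out all three pairings separately; this is the same argument, just presented more economically.
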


\begin{proof}
Let's get double combinations and see how third is
excluded.

\begin{compactenum}[(a)]
\item Suppose we are true $\left[ B,A,C\right] $ and $\left[ C,A,D%
\right] ,$ \ (see Fig.~\ref{fig:figure.1}, $(a)$) then from the order axioms in Def.~\ref{def:orderAxioms}, we have:

\[
\left. 
\begin{array}{c}
\left[ B,A,C\right] \\ 
\left[ C,A,D\right]%
\end{array}%
\right\} \overset{\mathbf{Lo.1}}{\implies }\left. 
\begin{array}{c}
\left[ B,A,C\right] \\ 
\left[ D,A,C\right]%
\end{array}%
\right\} \overset{\mathbf{Lo.4}}{\implies }\left[ D,B,A\right] \vee \left[
B,D,A\right].
\]

From Axiom $\mathbf{Lo.2}$ loses the possibility that $\left[ D,A,B\right] $
is true, so $\left[ D,A,B\right] $ is false.

\item Suppose we are true $\left[ B,A,C\right] $ and $\left[ D,A,B%
\right] ,$ \  (see Fig.~\ref{fig:figure.1},  $(b)$) then from the axioms above we have:

\[
\left. 
\begin{array}{c}
\left[ B,A,C\right] \\ 
\left[ D,A,B\right]%
\end{array}%
\right\} \overset{\mathbf{Lo.1}}{\implies }\left. 
\begin{array}{c}
\left[ B,A,C\right] \\ 
\left[ B,A,D\right]%
\end{array}%
\right\} \overset{\mathbf{Lo.4}}{\implies }\left[ A,D,C\right] \vee \left[
A,C,D\right].
\]

From Axiom $\mathbf{Lo.2}$ loses the possibility that $\left[ C,A,D\right] $
be true, so $\left[ C,A,D\right] $ is false.

but each of them $\left[ A,D,C\right] \vee \left[ A,C,D\right] $ derives
that $\left[ C,A,D\right] $ is false.

\item Suppose we are true $\left[ B,A,C\right] $ and $\left[ D,A,B%
\right] ,$ \  (see Fig.~\ref{fig:figure.1}, $(c)$) then from the axioms above we have:

\[
\left. 
\begin{array}{c}
\left[ C,A,D\right] \\ 
\left[ D,A,B\right]%
\end{array}%
\right\} \overset{\mathbf{Lo.4}}{\implies }\left[ B,C,A\right] \vee \left[
C,A,B\right].
\]

From Axiom $\mathbf{Lo.2}$ loses the possibility that $\left[ B,A,C\right] $
be true, so $\left[ B,A,C\right] $ is false.

\end{compactenum}
\end{proof}

% here we bring the definition%

\begin{definition}
For three different points $A,B,C\in \ell $ , that, we say that points $B$
and $C$ lie on the same side of point $A$, if we have exactly one of $\left[
A,B,C\right] ,\left[ A,C,B\right] .$
\end{definition}

\begin{proposition}\label{Pr.2}
%For all $A,B,C,D\in \ell $ , then $\left[ A,B,C\right] $ and $\left[ A,C,D%
%\right] \Longrightarrow \left[ A,B,D\right] $ and $\left[ B,C,D\right] $
For every four different points $A,B,C,D$ in a line $\ell ,$ in Desargues
affine plane$.$ Then %
\[
\left. 
\begin{array}{c}
\lbrack A,B,C] \\ 
\lbrack A,C,D]%
\end{array}%
\right\} \Longrightarrow \left\{ 
\begin{array}{c}
\lbrack A,B,D] \\ 
\lbrack B,C,D]%
\end{array}%
\right. 
\]

\end{proposition}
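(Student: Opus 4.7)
The plan is to first establish $[B,C,D]$ by eliminating the other two possibilities provided by axiom Lo.2 applied to the triple $B,C,D$, and then obtain $[A,B,D]$ by a single application of Lo.3 to $[A,B,C]$ and $[B,C,D]$.

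By Lo.2, exactly one of $[B,C,D]$, $[C,D,B]$, $[D,B,C]$ holds. The strategy is to rule out the last two.

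To eliminate $[C,D,B]$: I would apply Lo.3 directly to the pair $[A,C,D]$ and $[C,D,B]$ (which fit the pattern $[X,Y,Z]$, $[Y,Z,W]$ with $X=A, Y=C, Z=D, W=B$); this yields $[A,C,B]$. By Lo.1 this is the same as $[B,C,A]$, which together with the hypothesis $[A,B,C]$ contradicts the uniqueness clause of Lo.2 applied to the triple $A,B,C$.

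To eliminate $[D,B,C]$: I would first rewrite it as $[C,B,D]$ via Lo.1 and then combine with the hypothesis $[A,B,C]$ using Lo.4, which produces the disjunction $[D,A,B]$ or $[A,D,B]$. The main obstacle is this branching, since Lo.4 is the only axiom that fits but delivers only a weaker either-or conclusion. I would handle the two subcases separately. In the first subcase, Lo.3 applied to $[D,A,B]$ and $[A,B,C]$ yields $[D,A,C]$, i.e.\ $[C,A,D]$ by Lo.1, which contradicts the hypothesis $[A,C,D]$ via Lo.2 applied to $A,C,D$. In the second subcase, Lo.3 applied to $[A,D,B]$ and $[D,B,C]$ yields $[A,D,C]$, i.e.\ $[C,D,A]$ by Lo.1, which again contradicts $[A,C,D]$ via Lo.2 on the triple $A,C,D$.

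Once $[C,D,B]$ and $[D,B,C]$ are both excluded, Lo.2 forces $[B,C,D]$. Finally, applying Lo.3 to $[A,B,C]$ together with $[B,C,D]$ delivers both $[A,B,D]$ and $[A,C,D]$, completing the proof. The delicate point is the second elimination: keeping track of which triples one is applying Lo.1 and Lo.2 to, and noticing that both branches of the Lo.4 disjunction happen to collapse onto the very triple $A,C,D$ where the hypothesis $[A,C,D]$ gives the contradiction.
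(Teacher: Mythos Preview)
Your proof is correct. Each axiom application checks out: the Lo.3 step on $[A,C,D]$ and $[C,D,B]$ indeed produces $[A,C,B]$, contradicting Lo.2 on the triple $A,B,C$; the Lo.4 step on $[A,B,C]$ and $[C,B,D]$ yields the disjunction you handle, and in both branches you correctly land on a relation among $A,C,D$ that clashes with the hypothesis $[A,C,D]$ via Lo.2. The final Lo.3 step then gives both conclusions at once.

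Your route, however, is genuinely different from the paper's. The paper argues via the auxiliary language of ``same side / opposite side'' of a point: from $[A,B,C]$ it reads off that $A,B$ lie on the same side of $C$, from $[A,C,D]$ that $A,D$ lie on opposite sides of $C$, and concludes (by an implicit transitivity of the side relation) that $B,D$ lie on opposite sides of $C$, i.e.\ $[B,C,D]$; it then repeats the maneuver about the point $B$ to get $[A,B,D]$. That argument is shorter to write but leans on properties of the side relation that are themselves consequences of Lo.1--Lo.4 and are not spelled out at that point of the paper. Your argument is longer but stays strictly at the level of the four axioms, handling the Lo.4 branching explicitly; it is in that sense more self-contained and would be easier to formalize.
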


\begin{proof}
By $[A,B,C],$ we have that the points $A$ and $B$ are in the same side of
point $C,$ by \ $[A,C,D],$ we have that the points $A$ and $D$ are on the
opposite side of point $C.$ Then, we have that, the point $B$ and $D$ are on
the opposite side of point $C$ (since otherwise we would have that points $A$%
, $B$ and $D$ would be on the same side of point $C$, this would exclude $%
[A,C,D]$)$,$ so we have that $[B,C,D].$

By $[A,B,C],$ we have that the points $A$ and $C$ on the opposite side of
point $B.$ Since $[B,C,D],$ we have that the points $C$ and $D$ are in the
same side of point $B.$ Then, we have that, the point $A$ and $D$ are on
opposite sides of the point $B$ (since otherwise we would have that points $%
A $ and $D$ \ would be on the same side of point $B$, and point $C$ this
will give us $[D,B,C]$ this would exclude $[B,C,D]$), so we have that $%
[A,B,D].$
\end{proof}

\begin{definition}
For three different points $A,B,C\in \ell $ , that, we say that points $B$
and $C$ lie on the same side of point $A$, if we have exactly one of $\left[
A,B,C\right] ,\left[ A,C,B\right] .$
\end{definition}

\begin{proposition}
If we have, that, for four different points $A,B,C,D\in \ell $: \ points $B$
and $C$ lie on the same side of point $A,$ and the points $C$ and $D$ lie on
the same side of point $A,$ then the points $B$ and $D$ lie on the same side
of point $A.$
\end{proposition}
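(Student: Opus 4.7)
The plan is a case analysis driven by the trichotomy in Lo.2, reducing the claim to Proposition~\ref{Pr.2} and short chains of the remaining betweenness axioms. By Lo.2 applied to $\{A,B,C\}$, exactly one of $[A,B,C]$, $[A,C,B]$, $[B,A,C]$ holds, and the hypothesis that $B,C$ lie on the same side of $A$ rules out $[B,A,C]$. The analogous dichotomy applies to $\{A,C,D\}$. This produces four cases, and in each it suffices to exhibit $[A,B,D]$ or $[A,D,B]$.

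The two ``direct'' cases are immediate from Proposition~\ref{Pr.2}. In the case $[A,B,C]$ and $[A,C,D]$, Proposition~\ref{Pr.2} gives $[A,B,D]$ at once. In the case $[A,C,B]$ and $[A,D,C]$, I would read the hypotheses as $[A,D,C]$ followed by $[A,C,B]$ and apply Proposition~\ref{Pr.2} with the roles of $B$ and $D$ interchanged to obtain $[A,D,B]$.

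The two remaining cases I would handle by contradiction, assuming $[B,A,D]$ (equivalently $[D,A,B]$ by Lo.1). For $[A,B,C]$ and $[A,D,C]$, Lo.3 applied to $[D,A,B]$ and $[A,B,C]$ produces $[D,A,C]$, which contradicts $[A,D,C]$ by the trichotomy Lo.2 on $\{A,C,D\}$. For $[A,C,B]$ and $[A,C,D]$---where $C$ plays the same ``middle'' role in both hypotheses---I would first apply Lo.4 to $[B,C,A]$ and $[A,C,D]$ to obtain the dichotomy $[D,B,C]$ or $[B,D,C]$. Combining the first alternative with $[B,C,A]$ via Lo.3 yields $[A,B,D]$; combining the Lo.1-reverse $[C,D,B]$ of the second alternative with $[A,C,D]$ via Lo.3 yields $[A,D,B]$. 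In either subcase the result contradicts the assumption $[B,A,D]$ by the trichotomy Lo.2 on $\{A,B,D\}$.

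The main obstacle is this last case, where $C$ is in the middle of both $\{A,B\}$ and $\{A,D\}$: Proposition~\ref{Pr.2} does not apply directly, and one must invoke Lo.4 to locate $B$ and $D$ on the side of $C$ opposite $A$ before Lo.3 can close the argument. All other cases reduce either to a direct invocation of Proposition~\ref{Pr.2} or to a single application of Lo.3 followed by trichotomy.
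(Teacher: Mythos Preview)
Your proof is correct and follows the same four-case breakdown as the paper. In fact your treatment is more complete: the paper's printed argument lists the hypothesis pair $[A,D,C]\wedge[A,C,B]$ twice (once resolved via Proposition~\ref{Pr.2}, once via Lo.3) and thereby never addresses the case $[A,C,B]\wedge[A,C,D]$---exactly the configuration you flag as the main obstacle and handle via Lo.4 followed by Lo.3. Your contradiction argument for the case $[A,B,C]\wedge[A,D,C]$ is also more transparent than the paper's bare citation of Proposition~\ref{Pr.1}, whose hypotheses (three relations sharing a common \emph{middle} point) do not obviously match that configuration. One stylistic remark: in your last case the chain Lo.4 $\to$ Lo.3 already produces $[A,B,D]\vee[A,D,B]$ without using the assumed $[B,A,D]$, so the contradiction wrapper is superfluous there and the argument is really direct.
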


\begin{proof}
Let's look at the possible cases of 'positioning' between the points,

If 
\[
\left. 
\begin{array}{c}
\left[ A,D,C\right] \\ 
\left[ B,C,A\right]%
\end{array}%
\right\} \overset{\mathbf{Lo.1}}{\Longrightarrow }\left. 
\begin{array}{c}
\left[ A,D,C\right] \\ 
\left[ A,C,B\right]%
\end{array}%
\right\} \overset{~\ref{Pr.2}}{\Longrightarrow }\left[ A,D,B\right] .
\]

If 

\[
\left. 
\begin{array}{c}
\left[ A,B,C\right] \\ 
\left[ A,C,D\right]%
\end{array}%
\right\} \overset{~\ref{Pr.2}}{\Longrightarrow }\left[ A,B,D\right].
\]

If 
\[
\left. 
\begin{array}{c}
\left[ A,D,C\right] \\ 
\left[ A,B,C\right]%
\end{array}%
\right\} \overset{~\ref{Pr.1}}{\Longrightarrow }\left[ A,B,D\right] \vee \left[ A,D,B\right] .
\]

If 
\[
\left. 
\begin{array}{c}
\left[ A,D,C\right] \\ 
\left[ B,C,A\right]%
\end{array}%
\right\} \overset{\mathbf{Lo.1}}{\Longrightarrow }\left. 
\begin{array}{c}
\left[ A,D,C\right] \\ 
\left[ A,C,B\right]%
\end{array}%
\right\} \overset{\mathbf{Lo.3}}{\Longrightarrow }\left[ A,D,B\right] .
\]

So we see that any of the possible cases stays.
The case when at least two of the four above points coincide the proof is clear.
\end{proof}

\begin{definition}
The parallel projection between the two lines in the Desargues affine plane,
will be called, a function,
\end{definition}

\[
P_{p}:\ell _{1}\rightarrow \ell _{2},\forall A,B\in \ell _{1},AB\parallel P_{p}(A)P_{p}(B).
\]

\noindent It is clear that this function is a bijection between any two lines in
Desargues affine planes.
%\bigskip

\begin{definition}\label{Ordered.D.A.Plane}
\cite{11} A Desargues Affine plane, is said to be \textbf{ordered}, provided

\begin{enumerate}
\item All lines in this plane are \textbf{ordered lines}.

\item Parallel projection of the points of one line onto the points of
another line in this plane, either preserves or reverses the ordering.
\end{enumerate}
\end{definition}

\begin{theorem}\label{thm:orderInALine}
Each translation (dilation which is different from $id_{\mathcal{P}}$ and
has no fixed point) in a finite Desargues affine plane $\mathcal{A}_{D}=(%
\mathcal{P},\mathcal{L},\mathcal{I})$ preserves order in a line $\ell $ of
this plane.
\end{theorem}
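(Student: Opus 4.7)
Fix three distinct collinear points $A, B, C \in \ell$ with $[A,B,C]$; the aim is to show $[T(A), T(B), T(C)]$. Since $T$ is a dilation, $T(\ell)$ is a line that is either equal to $\ell$ or is disjoint from and parallel to $\ell$. The strategy is to identify $T$ on $\ell$ with a parallel projection in the first case, and in the second case to reduce to the first by factoring $T$ through a point off $\ell$.

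Suppose first that $T(\ell)\neq\ell$. The standard rigidity of translations in a Desargues affine plane gives that all the ``traces'' $\ell^{XT(X)}$ with $X\in\ell$ lie in a single parallel class, the \emph{direction} of $T$. This direction cannot be parallel to $\ell$, else $T(\ell)$ would equal $\ell$. Consequently the induced map $T\colon\ell\to T(\ell)$ is precisely the parallel projection that sends each $X\in\ell$ along $\ell^{XT(X)}$ onto $T(\ell)$. By Definition~\ref{Ordered.D.A.Plane}, this parallel projection either preserves or reverses the ordering of $\ell$. In the order-preserving case, $[A,B,C]$ yields $[T(A),T(B),T(C)]$ at once; in the order-reversing case it yields $[T(C),T(B),T(A)]$, which by axiom $\mathbf{Lo.1}$ coincides with $[T(A),T(B),T(C)]$. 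Either way we conclude.

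Suppose now that $T(\ell)=\ell$, so the direction of $T$ is parallel to $\ell$. I decompose $T$ as a product of two translations whose directions are each not parallel to $\ell$. Pick $O\in\ell$, set $Q:=T(O)\in\ell$, and using axiom $3^{o}$ choose some $R\notin\ell$. Let $T_{1}$ be the unique translation sending $O$ to $R$ and $T_{2}$ the unique translation sending $R$ to $Q$; since a translation is determined by the image of a single point, $T=T_{2}\circ T_{1}$. The direction $\ell^{OR}$ of $T_{1}$ meets $\ell$ only at $O$ (as $R\notin\ell$) and hence is not parallel to $\ell$; the same holds for the direction $\ell^{RQ}$ of $T_{2}$. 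Therefore $T_{1}(\ell)\neq\ell$ and $T_{2}(T_{1}(\ell))=\ell\neq T_{1}(\ell)$, so the previous case applies to $T_{1}$ on $\ell$ and to $T_{2}$ on $T_{1}(\ell)$; composing the two betweenness-preserving maps yields $[T(A),T(B),T(C)]$.

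The main technical ingredient is the rigidity lemma invoked at the start of the first case: all traces $\ell^{XT(X)}$ of a translation share a single direction. Its short proof observes that if traces $\ell^{XT(X)}$ and $\ell^{YT(Y)}$ met at a common point $P$, then, $T$ being a dilation, each trace would have to be $T$-invariant (its $T$-image is both parallel to itself and already contains the point $T(X)$, respectively $T(Y)$), whence $T(P)=P$---contradicting the hypothesis that $T$ has no fixed point. The other background fact, used in the second case, is the uniqueness of a translation by its value at a single point, again a standard consequence of Desargues' Axiom. No additional appeal to Pappus or to the finiteness hypothesis is required for the betweenness-preservation argument itself.
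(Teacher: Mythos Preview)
Your proof is correct and follows essentially the same two-case strategy as the paper: when $T(\ell)\neq\ell$ you identify $T|_\ell$ with a parallel projection and invoke Definition~\ref{Ordered.D.A.Plane}, and when $T(\ell)=\ell$ you factor $T$ through a point off $\ell$ as a composite of two translations in oblique directions. Your treatment is in fact slightly more careful than the paper's, since you explicitly dispose of the order-reversing possibility via $\mathbf{Lo.1}$ and sketch why all traces of $T$ are parallel, whereas the paper tacitly assumes the projection preserves betweenness and cites the direction property from~\cite{5}.
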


\begin{proof}
Let $A,B,C\in \ell ,$ and $\left[ A,B,C\right] .$ Let's have, too, a
each translation $\varphi $ on this plan. \ Here we will distinguish two
cases regarding the direction of translation. 
%(see \cite{1},\cite{2},\cite{5},%
%\cite{15}).
\begin{compactenum}[{Case}.1]
\item The translation $\varphi $ has not direction according to
line $\ell ,$ in this case $\varphi \left( \ell \right) \parallel \ell ,$
and $\varphi \left( \ell \right) \neq \ell$, (see Fig.~\ref{fig:figure.2}).  Mark $\varphi \left( \ell
\right) =\ell ^{\prime }.$

\noindent From the translation properties, we have,

\[
A,B,C\in \ell \Longrightarrow \varphi \left( A\right) ,\varphi \left(
B\right) ,\varphi \left( C\right) \in \ell ^{\prime }.
\]
From the translation properties described at \cite{5}, we have the following
parallelisms,

\[
AB\parallel \varphi \left( A\right) \varphi \left( B\right) ,AC\parallel
\varphi \left( A\right) \varphi \left( C\right) ,BC\parallel \varphi \left(
B\right) \varphi \left( C\right), 
\]

\noindent and

\[
A\varphi \left( A\right) \parallel B\varphi \left( B\right) \parallel
C\varphi \left( C\right) .
\]

\noindent  So we have the following parallelograms (see \cite{6}):

\[
\left( A,B,\varphi \left( B\right) ,\varphi \left( A\right)
\right) ,\left( A,C,\varphi \left( C\right) ,\varphi \left( A\right) \right)
,\left( B,C,\varphi \left( C\right) ,\varphi \left( B\right) \right) ,
\]

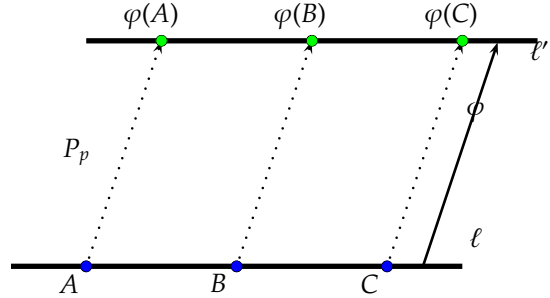
\begin{figure}[!ht]
\centering
\begin{pspicture}
(2.,0.2)(9.5,5.)
\psline[linewidth=2.pt](2.,1.)(8.,1.)
\psline[linewidth=2.pt](3.,4.)(9.,4.)
\psline[linewidth=1.pt,linestyle=dotted]{->}(3.,1.)(4.,4.)
\psline[linewidth=1.pt,linestyle=dotted]{->}(5.,1.)(6.,4.)
\psline[linewidth=1.pt,linestyle=dotted]{->}(7.,1.)(8.,4.)
\rput[tl](2.65,0.9){$A$}
\rput[tl](4.65,0.9){$B$}
\rput[tl](6.65,0.9){$C$}
\rput[tl](3.5,4.5){$\varphi(A)$}
\rput[tl](5.5,4.5){$\varphi(B)$}
\rput[tl](7.5,4.5){$\varphi(C)$}
\rput[tl](2.7,2.7){$P_p$}
\rput[tl](8.1,1.5){$\ell$}
\rput[tl](8.9,4.){$\ell'$}
\psline[linewidth=1.pt]{->}(7.47,0.98)(8.47,3.98)
\rput[tl](8.06,3.18){$\varphi$}
\begin{scriptsize}
%\psdots[dotstyle=*,linecolor=blue](3.,1.)
%\psdots[dotstyle=*,linecolor=blue](5.,1.)
%\psdots[dotstyle=*,linecolor=blue](7.,1.)
\psdots[dotstyle=o, linewidth=1.2pt,linecolor = black, fillcolor = blue]%
(3.,1.)(5.,1.)(7.,1.)
%\psdots[dotstyle=*,linecolor=red](4.,4.)
%\psdots[dotstyle=*,linecolor=red](6.,4.)
%\psdots[dotstyle=*,linecolor=red](8.,4.)
\psdots[dotstyle=o, linewidth=1.2pt,linecolor = black, fillcolor = green]%
(4.,4.)(6.,4.)(8.,4.)
\end{scriptsize}
\end{pspicture}
\caption{The translation $\protect%
\varphi $ as a parallel projection $P_{p}:\ell \rightarrow \ell ^{\prime }.$}
\label{fig:figure.2}
\end{figure}

Clearly, we see that the translation $\varphi $ can be seen as parallel
projection $P_{p}$, from line $\ell $ to line $\ell ^{\prime }$, with
direction the line $\ell ^{A\varphi (A)}$.

Thus by definition and so $[A,B,C]$, we have $\left[ \varphi \left( A\right)
,\varphi \left( B\right) ,\varphi \left( C\right) \right] .$

\item The translation $\varphi $ has direction according to line $%
\ell ,$ in this case $\varphi \left( \ell \right) \parallel \ell ,$ and $%
\varphi \left( \ell \right) =\ell $, (see Fig.~\ref{fig:figure.3}).  By translation properties have,

\[
A,B,C\in \ell \Longrightarrow \varphi \left( A\right) ,\varphi \left(
B\right) ,\varphi \left( C\right) \in \ell .
\]

In this case, we choose a point $E\notin \ell $ of the plan. For this point, 
$\exists !\ell _{\ell }^{E}\in \mathcal{L},$ so it exists a translation $%
\varphi _{1},$ such that $\varphi _{1}\left( A\right) =E,$ and exists a
translation $\varphi _{2},$ such that $\varphi _{2}\left( E\right) =\varphi
\left( A\right) .$

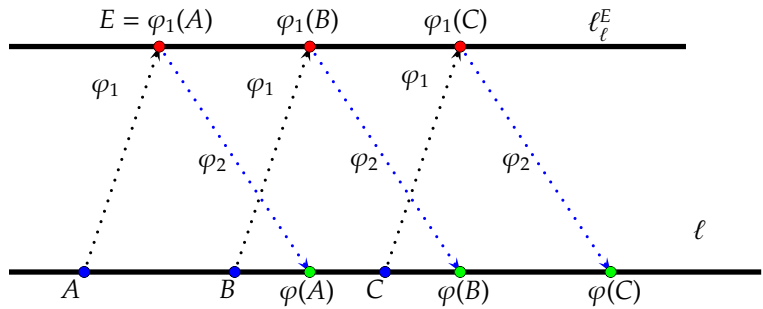
\begin{figure}[!ht]
\centering
\begin{pspicture}
(1.,0.)(11.,5.)
\psline[linewidth=2.pt](1.,1.)(11.,1.)
\psline[linewidth=2.pt](1.,4.)(10.,4.)
\psline[linewidth=1.2pt,linestyle=dotted]{->}(2.,1.)(3.,4.)
\psline[linewidth=1.2pt,linestyle=dotted]{->}(4.,1.)(5.,4.)
\psline[linewidth=1.2pt,linestyle=dotted]{->}(6.,1.)(7.,4.)
\psline[linewidth=1.2pt,linestyle=dotted,linecolor=blue]{->}(3.,4.)(5.,1.)
\psline[linewidth=1.2pt,linestyle=dotted,linecolor=blue]{->}(7.,4.)(9.,1.)
\psline[linewidth=1.2pt,linestyle=dotted,linecolor=blue]{->}(5.,4.)(7.,1.)
\rput[tl](1.7,0.9){$A$}
\rput[tl](3.8,0.9){$B$}
\rput[tl](5.75,0.9){$C$}
\rput[tl](4.6,0.9){$\varphi(A)$}
\rput[tl](6.7,0.9){$\varphi(B)$}
\rput[tl](8.7,0.9){$\varphi(C)$}
\rput[tl](2.2,4.5){$E=\varphi_{1}(A)$}
\rput[tl](4.56,4.5){$\varphi_{1}(B)$}
\rput[tl](6.52,4.5){$\varphi_{1}(C)$}
\rput[tl](8.7,4.5){$\ell^{E}_{\ell}$}
\rput[tl](10.1,1.68){$\ell$}
\rput[tl](2.1,3.56){$\varphi_{1}$}
\rput[tl](7.56,2.58){$\varphi_{2}$}
\rput[tl](4.16,3.58){$\varphi_{1}$}
\rput[tl](6.22,3.68){$\varphi_{1}$}
\rput[tl](3.52,2.56){$\varphi_{2}$}
\rput[tl](5.54,2.58){$\varphi_{2}$}
\begin{scriptsize}
%\psdots[dotstyle=*,linecolor=blue](2.,1.)
\psdots[dotstyle=o, linewidth=1.2pt,linecolor = black, fillcolor = blue]%
(2.,1.)(4.,1.)(6.,1.)
%\psdots[dotstyle=*,linecolor=red](3.,4.)
\psdots[dotstyle=o, linewidth=1.2pt,linecolor = black, fillcolor = red]%
(3.,4.)(5.,4.)(7.,4.)
%\psdots[dotstyle=*,linecolor=blue](4.,1.)
%\psdots[dotstyle=*,linecolor=red](5.,4.)
%\psdots[dotstyle=*,linecolor=blue](6.,1.)
%\psdots[dotstyle=*,linecolor=red](7.,4.)
\psdots[dotstyle=o, linewidth=1.2pt,linecolor = black, fillcolor = green]%
(5.,1.)(9.,1.)(7.,1.)
%\psdots[dotstyle=*,linecolor=yellow](5.,1.)
%\psdots[dotstyle=*,linecolor=yellow](9.,1.)
%\psdots[dotstyle=*,linecolor=yellow](7.,1.)
\end{scriptsize}
\end{pspicture}
\caption{The translations $\protect%
\varphi $ as a composition of two translations $\protect\varphi _{1}$ and $%
\protect\varphi _{2}:$ $\protect\varphi =\protect\varphi _{2}\circ \protect%
\varphi _{1}.$}
\label{fig:figure.3}
\end{figure}

Well,

\[
\left( \varphi _{2}\circ \varphi _{1}\right) \left( A\right) =\varphi
\left( A\right) .
\]

Two translations $\varphi _{1}$ and $\varphi _{2}$ have different directions
from line $\ell .$ Now we repeat the first case twice, once for the
translation $\varphi _{1}$ and once for the translation $\varphi _{2}$, and
we have:

\[
\left[ A,B,C\right] \Longrightarrow \left[ \varphi _{1}\left( A\right)
,\varphi _{1}\left( B\right) ,\varphi _{1}\left( C\right) \right]
\Longrightarrow \left[ \varphi _{2}\left( \varphi _{1}\left( A\right)
\right) ,\varphi _{2}\left( \varphi _{1}\left( B\right) \right) ,\varphi
_{2}\left( \varphi _{1}\left( C\right) \right) \right] 
\]

thus,

\[
\left[ A,B,C\right] \Longrightarrow \left[ \left( \varphi_{2}\circ \varphi_{1}\right) \left( A\right) ,
\left( \varphi _{2}\circ \varphi _{1}\right)
\left( B\right) ,\left( \varphi_{2}\circ \varphi _{1}\right) \left(
C\right) \right] .
\]

Hence

\[
\left[ A,B,C\right] \Longrightarrow \left[ \varphi \left( A\right)
,\varphi \left( B\right) ,\varphi \left( C\right) \right] 
\]

\end{compactenum}

\end{proof}

\begin{theorem}\label{thm:finiteOrderedDesarguesianAffinePlane}
Every finite ordered Desarguesian affine plane over $\mathbb{R}^2$ is Pappian.
\end{theorem}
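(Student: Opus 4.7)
The plan is to derive this as an immediate corollary of Theorem~\ref{thm:Tecklenburg}, which asserts that every finite Desarguesian affine plane is Pappian, with no reference to any order structure. A finite ordered Desarguesian affine plane, by Definition~\ref{Ordered.D.A.Plane}, is first of all a Desarguesian affine plane to which the ordered-line axioms \textbf{Lo.1}--\textbf{Lo.4} and the parallel-projection compatibility condition have been appended. Forgetting the order, the underlying incidence structure $(\mathcal{P},\mathcal{L})$ remains a finite Desarguesian affine plane.

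Thus my proof collapses to a single invocation: by Theorem~\ref{thm:Tecklenburg} applied to $(\mathcal{P},\mathcal{L})$, the plane is Pappian, and therefore so is the ordered plane built on top of it. The qualifier ``over $\mathbb{R}^2$'' in the statement is cosmetic here, inherited from the blanket convention in the preliminaries that the ambient geometry is Euclidean; Tecklenburg's result applies to any finite Desarguesian affine plane regardless of this embedding. The order axioms of Definition~\ref{def:orderAxioms} and the projection-compatibility clause of Definition~\ref{Ordered.D.A.Plane} are purely additive structure on top of the affine incidence data, so they neither destroy nor weaken the finite-Desarguesian hypothesis that Tecklenburg's theorem consumes.

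Consequently there is no substantive obstacle: the theorem is bookkeeping, checking only that imposing an order does not disturb the finite-Desarguesian property and that the Pappus condition is inherited automatically. If I wanted a more self-contained route I could unpack the classical Wedderburn chain that powers Theorem~\ref{thm:Tecklenburg}, namely that every finite Desarguesian plane coordinatizes over a finite division ring, every finite division ring is commutative by Wedderburn's little theorem, and coordinatization over a commutative field forces the Pappus condition. But since Theorem~\ref{thm:Tecklenburg} is already stated in the preliminaries, citing it directly is cleaner and suffices for the proof.
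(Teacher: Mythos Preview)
Your proposal is correct and follows essentially the same route as the paper: both arguments reduce immediately to Theorem~\ref{thm:Tecklenburg}. The paper's own proof additionally remarks that the lines of a plane in $\mathbb{R}^2$ satisfy the order axioms Lo.1--Lo.4 and cites the fact that Euclidean spaces are Pappian, but these are side observations; the logical core---invoke Tecklenburg on the underlying finite Desarguesian plane---is exactly what you do, and your version is arguably the cleaner of the two.
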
 
\begin{proof}
Let $\pi$ be a finite Desarguesian affine plane over $\mathbb{R}^2$.  From Theorem~\ref{thm:Tecklenburg}, $\pi$ is Pappian.   Since every line in $\pi$ satisfies Axioms Lo.1-Lo.4 of definition~\ref{def:orderAxioms}, so every line in $\pi$ is an ordered line, also, this plane meets the conditions of the definition~\ref{Ordered.D.A.Plane}.   Consequently,  $\pi$ is a finite ordered Desarguesian affine plane. It has been observed that every Euclidean space is Pappian~\cite[\S 1.1, p. 195]{Tecklenburg1991EuclideanSpaceIsAPappianSpace}.   Hence, $\pi$ is Pappian.
\end{proof}

\section{The ordered skew-field in a line in ordered
desargues affine plane}

In \cite{3}, \cite{4}, we have shown how to construct a
skew-field over a line in Desargues affine plane. Let it be $\ell $ a line
of Desargues affine plane $\mathcal{A}_{\mathcal{D}}=\left( \mathcal{P},%
\mathcal{L},\mathcal{I}\right) .$

We mark \ $\mathbf{K}=$\ $\left( \ell ,+,\ast \right) $\ the skew-field
constructed over the line $\ell ,$ in Desargues affine plane $\mathcal{A}_{%
\mathcal{D}}.$ In previous work \cite{3}, we have shown how we can transform
a line in the Desargues affine plane into an additive Group of its points.
We have also shown \cite{4} how to construct a skew-field with a set of
points on a line in the Desargues affine plane. In addition, for a line of
in any Desargues affine plane, we construct a skew-field with the points of
this line, by appropriately defined addition and multiplication of points in
a line.

During the construction of the skew-field over a line of a Desargues affine
plane, we choose two points (each affine plan has at least two points),
which we write with $O$ and $I$ and call them zero and one, respectively.
These points play the role of unitary elements regarding the two actions
addition and multiplication, respectively.

\begin{definition}
\cite{11},\cite{10}A skew-field $\mathbf{K}$ is said to be ordered,
if, satisfies the following conditions

\begin{compactenum}[1.]
\item  $K=K_{-}\cup \left\{ 0_{K}\right\} \cup K_{+}$ and $K_{-}\cap
\left\{ 0_{K}\right\} \cap K_{+}=\varnothing ,$

\item  For all $k_{1},k_{2}\in K_{+},k_{1}+k_{2}\in K_{+},$ ($K_{+}$ is closed under addition)

\item For all $k_{1},k_{2}\in K_{+},k_{1}\ast k_{2}\in K_{+},$ (a product of positive elements is positive)
\end{compactenum}
\end{definition}

Consider a line $\ell $ in a Desargues affine plane, by defining
the affine plane so that we have at least two points in this line, which we mark $%
\mathbf{O}$ and $\mathbf{I.}$ For this line, we have shown in the previous
works (see \cite{3},\cite{4}) that we can construct a skew-field $\mathbf{K=}%
\left( \mathbf{\ell ,+,\ast }\right) $. We have shown
~\cite{3} 
%(see \cite{3},\cite{4},%
%\cite{15}) 
the independence of choosing points $\mathbf{O}$ and $\mathbf{I}$
in a line, for the construction of a skew-field over this line.

To establish a separation of the set of points in line $\ell $, we separate, firstly
the points $\{\mathbf{O}\},$ which we mark with

\[
\mathbf{K}_{+}=\left\{ X\in \mathbf{\ell }\text{ }\mathbf{|}\text{ }[O,X,I]%
\text{ }or\text{ }[O,I,X]\right\} (=\mathbf{\ell }_{+}).
\]

\noindent and

\[
\mathbf{K}_{-}=\left\{ X\in \mathbf{\ell }\text{ }\mathbf{|}\text{ }%
[X,O,I]\right\} (=\mathbf{\ell }_{-}).
\]

So, clearly, from the definition of the ordered line in a Desargues affine
plane, we have

\[
\mathbf{K}=\mathbf{K}_{-}\cup \left\{ 0_{\mathbf{K}}\right\} \cup \mathbf{K}_{+}, 
\]

\noindent where $0_{\mathbf{K}}=O$, and

\[
\mathbf{K}_{-}\cap \left\{ 0_{\mathbf{K}}\right\} \cap \mathbf{K}_{+}=\varnothing. 
\]

\begin{lemma}\label{lemma:A+C}
For all $A,C\in \mathbf{K}_{+}\Longrightarrow A+C\in \mathbf{K}_{+}.$
\end{lemma}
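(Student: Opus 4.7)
The plan is to realize $A+C$ geometrically via translation. From the construction of the skew-field on $\ell$ in \cite{3}, \cite{4}, the map $X \mapsto A+X$ is the restriction to $\ell$ of the unique translation $\tau_A$ of the Desargues affine plane with $\tau_A(O) = A$; in particular $\tau_A(C) = A+C$ and $\tau_A(-A) = O$. By Theorem~\ref{thm:orderInALine} the map $\tau_A$ preserves the betweenness relation on $\ell$.

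First I would verify that $-A \in \mathbf{K}_-$. Geometrically the additive inverse $-A$ is the image of $A$ under the central symmetry of the plane through $O$, so $[A, O, -A]$ holds. Combining $[A, O, -A]$ with one of $[O, A, I]$ or $[O, I, A]$ (the two cases of the hypothesis $A \in \mathbf{K}_+$) through Axioms Lo.1, Lo.3 and Proposition~\ref{Pr.2} yields $[-A, O, I]$, so indeed $-A \in \mathbf{K}_-$.

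Since $-A$ and $I$ lie on opposite sides of $O$ while $C$ and $I$ lie on the same side of $O$, the ``same-side'' transitivity proposition preceding Definition~\ref{Ordered.D.A.Plane} delivers $[-A, O, C]$. Applying the betweenness-preserving map $\tau_A$ to this relation gives
\[
[\tau_A(-A),\tau_A(O),\tau_A(C)] = [O, A, A+C],
\]
so $A$ lies between $O$ and $A+C$. To finish I would combine $[O, A, A+C]$ with the position of $A$: if $[O, I, A]$, then Proposition~\ref{Pr.2} applied to the quadruple $(O, I, A, A+C)$ produces $[O, I, A+C]$; if $[O, A, I]$, then both $I$ and $A+C$ lie on the ray from $A$ opposite to $O$, so by Axiom Lo.2 either $[A, A+C, I]$ or $[A, I, A+C]$ must hold, and Axiom Lo.3 applied to the appropriate quadruple yields $[O, A+C, I]$ or $[O, I, A+C]$ respectively. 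In every case $A+C \in \mathbf{K}_+$.

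The main obstacle is the combinatorial bookkeeping with the betweenness axioms: each step requires applying Lo.3, Lo.4, or Pr.2 to triples arranged in just the right pattern of overlap, with Lo.1 used to reverse orientation where necessary, and the final step further splits on the relative positions of $I$ and $A+C$ along the positive ray from $A$. A minor secondary issue is the degenerate situations (when $A$, $C$, or $A+C$ equals $I$), which need a brief separate verification together with the harmless extension of the stated definition of $\mathbf{K}_+$ to include $I$ itself.
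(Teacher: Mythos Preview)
Your approach and the paper's share the same core: apply the translation $\tau_A$ (the paper's $\varphi_1$) together with Theorem~\ref{thm:orderInALine} to produce $[O,A,A+C]$, and conclude. The paper reaches $[O,A,A+C]$ by applying $\varphi_1$ to the assumed relation $[O,A,C]$ to get $[A,\varphi_1(A),A+C]$, then invoking $[O,A,\varphi_1(A)]$ and Lo.3; you instead set up $[-A,O,C]$ first and apply $\tau_A$ once. Your route is tidier---no case split on $[O,A,C]$ versus $[O,C,A]$ is needed, and you actually carry out the final reduction to the defining conditions $[O,A+C,I]$ or $[O,I,A+C]$, which the paper simply omits.

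The one soft spot is the assertion that $[A,O,-A]$ holds ``because $-A$ is the image of $A$ under the central symmetry through $O$.'' Central symmetry is a dilation, not a translation, so neither Theorem~\ref{thm:orderInALine} nor axioms Lo.1--Lo.4 by themselves force $O$ to lie between a point and its reflected image. To close this within the paper's framework you should realise $\sigma_O|_\ell$ as a composition of two parallel projections through an auxiliary line meeting $\ell$ at $O$, invoke condition~(2) of Definition~\ref{Ordered.D.A.Plane}, and then use a short Lo.2 argument to rule out $[O,I,-I]$ and $[O,-I,I]$, forcing $[-I,O,I]$ and hence $[A,O,-A]$ for every $A\in K_+$. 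This is, however, precisely the same gap as in the paper's own proof: the paper's unjustified step $[O,A,\varphi_1(A)]=[O,A,2A]$ is exactly the $\tau_A$-image of your $[A,O,-A]$, so the two claims are equivalent. You have not introduced a new difficulty, only relocated the one already present.
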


\begin{proof}
Let's have the points $A,C\in \ell ,$ such that $[O,A,C]$ or 
$[O,C,A].$ Suppose we have true $\left[ O,A,C\right] ,$  from the
construction of the $A+C$ point we have that $A+C$ $\in \ell $, (see Fig.~\ref{fig:figure.4}) which was
built according to the algorithm

\[
\forall A,C\in \ell ,\left[ 
\begin{array}{l}
\mathbf{Step.1}.\exists B\notin OI \\ 
\mbox{} \\ 
\mathbf{Step.2}.\ell _{OI}^{B}\cap \ell _{OB}^{A}=D \\ 
\mbox{} \\ 
\mathbf{Step.3}.\ell _{CB}^{D}\cap OI=E%
\end{array}%
\right] \Leftrightarrow A+C=E.
\]

\begin{figure}[!ht]
\centering
\begin{pspicture}
(1.,0.)(8.,5.)
\psline[linewidth=2.pt](1.,1.)(8.,1.)
\psline[linewidth=2.pt](2.,4.)(7.,4.)
\psline[linewidth=1.2pt,linecolor=blue](2.,1.)(3.,4.)
\psline[linewidth=1.2pt,linecolor=red](3.,4.)(5.,1.)
\rput[tl](1.75,0.9){$O$}
\rput[tl](3.66,0.9){$A$}
\rput[tl](4.7,0.9){$C$}
\rput[tl](6.4,0.9){$A+C$}
\rput[tl](2.68,4.7){$B$}
\rput[tl](4.9,4.7){$D$}
\rput[tl](7.3,1.74){$\ell$}
\rput[tl](7.1,4.44){$\ell^{B}_{\ell}$}
\rput[tl](1.7,3.){$\ell^{OB}$}
\rput[tl](3.92,3.86){$\ell^{D}_{\ell^{OB}}$}
\rput[tl](6.3,3.14){$\ell^{D}_{\ell^{BC}}$}
\rput[tl](3.3,2.74){$\ell^{BC}$}
\psline[linewidth=1.2pt,linecolor=blue](4.,1.)(5.,4.)
\psline[linewidth=1.2pt,linecolor=red](5.,4.)(7.,1.)
\begin{scriptsize}
%\psdots[dotstyle=*,linecolor=xdxdff](2.,1.)
\psdots[dotstyle=o, linewidth=1.2pt,linecolor = black, fillcolor = blue]%
(2.,1.)(4.,1.)(5.,1.)(4.,1.)
\psdots[dotstyle=o, linewidth=1.2pt,linecolor = black, fillcolor = red]%
(7.,1.)
\psdots[dotstyle=o, linewidth=1.2pt,linecolor = black, fillcolor = green]%
(3.,4.)(5.,4.)
%\psdots[dotstyle=*,linecolor=xdxdff](4.,1.)
%\psdots[dotstyle=*,linecolor=xdxdff](5.,1.)
%\psdots[dotstyle=*,linecolor=red](3.,4.)
%\psdots[dotstyle=*,linecolor=xdxdff](4.,1.)
%\psdots[dotstyle=*,linecolor=xdxdff](7.,1.)
%\psdots[dotstyle=*,linecolor=red](5.,4.)
\end{scriptsize}
\end{pspicture}
\caption{The addition of points in a
line of Desargues affine plane, as a composition of tow parallel projections.}
\label{fig:figure.4}
\end{figure}
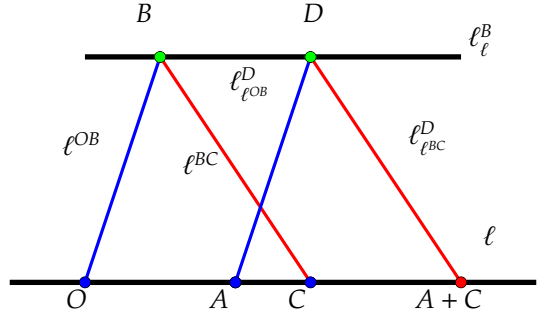

\noindent From the construction of the $A+C$ point, we have that:

\[
\ell ^{OB}\parallel \ell ^{AD};\ell ^{BC}\parallel \ell ^{D(A+C)},\ell^{BD}\parallel OI.
\]

\noindent From these parallelisms, there is the translation $\varphi _{1}$ with direction,
according to line $\ell ,$ such that, $\varphi_{1}(O)=A.$

\[
\left[ O,A,C\right] =\left[ O,\varphi_{1}(O),C\right] .
\]

%\colorbox{green!20}{Orgest, replace {\bf 'the above theorem'} with Theorem~\ref{thm:orderInALine}} 

\noindent Since, by with Theorem~\ref{thm:orderInALine}, the translations preseve the line-order, we have

\begin{align*}
\left[ O,A,C\right] =\left[ O,\varphi_{1}(O),C\right] & \Longrightarrow %
\left[ \varphi_{1}\left( O\right) ,\varphi_{1}\left( \varphi_{1}(O)\right) ,\varphi_{1}(C)\right]\\
           & \Longrightarrow
\left[ \varphi_{1}\left( O\right) ,\varphi_{1}\left( A\right) ,\varphi_{1}(C)\right]
=[A,\varphi_{1}(A),\varphi_{1}(C)].
\end{align*}

\noindent Hence, we have defined a translation (see \cite{1},\cite{2}), such that we have
true

\[
[O,\varphi_{1}(O)=A,\varphi _{1}(A)]=[O,A,\varphi_{1}(A)]
\]

\noindent From this translation, we also have,

\[
\varphi_{1}(B)=D\ \mbox{and}\ \varphi_{1}(C)=A+C,
\]

%\colorbox{green!20}{Orgest, replace {\bf 'the above theorem'} with Theorem~\ref{thm:orderInALine}} 

\noindent  By by with Theorem~\ref{thm:orderInALine} we have

\[
\left[ O,A,C\right] \Longrightarrow \left[ \varphi_{1}(O),\varphi_{1}\left( A\right) ,
\varphi_{1}(C)\right] =\left[ A,\varphi_{1}\left(A\right) ,A+C\right].
\]

For four points $O,A,\varphi_{1}\left( A\right) ,A+C$ $\in \ell ,$ we have
true

\[
\left. 
\begin{array}{c}
\lbrack O,A,\varphi_{1}(A)] \\ 
\left[ A,\varphi_{1}\left( A\right) ,A+C\right]%
\end{array}%
\right\} \overset{\mathbf{Lo.3}}{\implies }\left[ O,A,A+C\right] \wedge %
\left[ O,\varphi_{1}(A),A+C\right] .
\]

By, $\left[ O,A,A+C\right] ,$ we have that

\[
A+C\in K_{+}(=\ell _{+}).
\]

\end{proof}

\begin{corollary}
For all $A,C\in \mathbf{K}%
_{-}\Longrightarrow A+C\in \mathbf{K}_{-}$.
\end{corollary}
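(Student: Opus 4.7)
The plan is to mimic the proof of Lemma~\ref{lemma:A+C} verbatim, transposing the role of $\mathbf{K}_+$ to $\mathbf{K}_-$. For $A,C\in\mathbf{K}_-$, the defining relations $[A,O,I]$ and $[C,O,I]$ place both points on the same side of $O$ (opposite the $I$ side), so by axiom Lo.2 we may assume $[O,A,C]$ (the case $A=C$ is a trivial special case). The point $A+C$ is produced by the three-step algorithm of Lemma~\ref{lemma:A+C} and is realized as $\varphi_1(C)$, where $\varphi_1$ is the translation along $\ell$ with $\varphi_1(O)=A$.

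Applying Theorem~\ref{thm:orderInALine} (translations preserve order on a line) to $[O,A,C]$ gives $[A,\varphi_1(A),A+C]$. Exactly as in the lemma, we also get $[O,A,\varphi_1(A)]$: this expresses the geometric fact that $\varphi_1$ pushes points along $\ell$ in the direction from $O$ toward $A$, so $\varphi_1(A)$ lies past $A$ in that direction. Chaining $[O,A,\varphi_1(A)]$ with $[A,\varphi_1(A),A+C]$ via axiom Lo.3 yields $[O,A,A+C]$.

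The final step converts $[O,A,A+C]$ into the target $[A+C,O,I]$, which is the definition of $A+C\in\mathbf{K}_-$. By Lo.1, $[O,A,A+C]$ is equivalent to $[A+C,A,O]$; combining $[A+C,A,O]$ with the hypothesis $[A,O,I]$ via axiom Lo.3 produces $[A+C,O,I]$, as required.

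The main obstacle, and the only step that is not a mechanical transcription of the lemma, is justifying $[O,A,\varphi_1(A)]$ for $A\in\mathbf{K}_-$; this mirrors the corresponding step of Lemma~\ref{lemma:A+C} and follows from the order-preservation of $\varphi_1$ (Theorem~\ref{thm:orderInALine}) together with the initial datum $[A,O,I]$, which pins down the direction of translation along $\ell$. An alternative, perhaps slicker, route is to reduce directly to Lemma~\ref{lemma:A+C} via additive inverses: once one verifies the auxiliary fact $[X,O,-X]$ (i.e., that negation reflects a point through $O$, a structural property of the skew-field construction in \cite{3}), one has $X\in\mathbf{K}_+\Longleftrightarrow -X\in\mathbf{K}_-$, and then $A,C\in\mathbf{K}_-$ forces $-A,-C\in\mathbf{K}_+$; applying Lemma~\ref{lemma:A+C} and using $-(A+C)=(-A)+(-C)$ in the abelian group $(\ell,+)$ yields $A+C\in\mathbf{K}_-$ immediately.
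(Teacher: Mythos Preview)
The paper states this corollary without proof, evidently intending it as the direct analogue of Lemma~\ref{lemma:A+C} with $\mathbf{K}_+$ replaced by $\mathbf{K}_-$. Your main argument does precisely this and is essentially correct. One small correction: the reduction from $[A,O,I]$ and $[C,O,I]$ to (without loss of generality) $[O,A,C]$ is not a consequence of Lo.2 but of Lo.4 together with Lo.1 --- rewrite $[C,O,I]$ as $[I,O,C]$ and apply Lo.4 to the pair $[A,O,I]$, $[I,O,C]$ to obtain $[C,A,O]\vee[A,C,O]$, i.e., $[O,A,C]\vee[O,C,A]$. The intermediate assertion $[O,A,\varphi_1(A)]$ is at exactly the same level of justification as in the paper's own proof of Lemma~\ref{lemma:A+C} (there it is simply asserted with a reference), so you match the paper on this point; your final Lo.3 chain from $[A+C,A,O]$ and $[A,O,I]$ to $[A+C,O,I]$ is clean and correct.

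Your alternative route via additive inverses is indeed slicker, but be aware of the logical order in the paper: the implication $A\in\mathbf{K}_+\Rightarrow -A\in\mathbf{K}_-$ is the \emph{next} corollary, and the converse you actually need ($A\in\mathbf{K}_-\Rightarrow -A\in\mathbf{K}_+$) is never stated. So this approach genuinely requires the auxiliary fact $[X,O,-X]$ to be established independently from the construction of $+$ in~\cite{3}, as you acknowledge; without that, one risks circularity with the subsequent corollary.
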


\begin{corollary}
For all $A\in \mathbf{K}_{+}\mathbf{\Longrightarrow }-A\in \mathbf{K}_{-}$.
\end{corollary}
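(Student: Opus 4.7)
The plan is to derive this corollary as an immediate algebraic consequence of Lemma~\ref{lemma:A+C} together with the disjoint decomposition $\mathbf{K} = \mathbf{K}_{-} \cup \{0_{\mathbf{K}}\} \cup \mathbf{K}_{+}$ established just before that lemma. All of the geometric content—betweenness, translations, parallel projections—has already been packaged inside Lemma~\ref{lemma:A+C}, so no further geometric construction is required at this step; the argument reduces to a short trichotomy.

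First I would dispose of the degenerate possibility $-A = 0_{\mathbf{K}}$. If $-A = O$, then from $A + (-A) = 0_{\mathbf{K}} = O$ and the additive identity law we obtain $A = O$, which contradicts $A \in \mathbf{K}_{+}$ since $O \notin \mathbf{K}_{+}$ (by disjointness of the three pieces of $\mathbf{K}$).

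Next I would rule out $-A \in \mathbf{K}_{+}$ by contradiction. Suppose both $A$ and $-A$ lie in $\mathbf{K}_{+}$. Then Lemma~\ref{lemma:A+C} forces $A + (-A) \in \mathbf{K}_{+}$. But $A + (-A) = 0_{\mathbf{K}} = O$, and $O$ cannot belong to $\mathbf{K}_{+}$ by the same disjoint decomposition. This is a contradiction.

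Having eliminated both $-A = 0_{\mathbf{K}}$ and $-A \in \mathbf{K}_{+}$, the trichotomy $\mathbf{K} = \mathbf{K}_{-} \cup \{0_{\mathbf{K}}\} \cup \mathbf{K}_{+}$ leaves only $-A \in \mathbf{K}_{-}$, which is the conclusion. I do not foresee a genuine obstacle; the only point worth underlining in the write-up is that the trichotomy itself rests on the ordered-line axioms $\mathbf{Lo.1}$--$\mathbf{Lo.2}$ applied to the triple $\{O, I, -A\}$, so that the final appeal to trichotomy is legitimate rather than circular.
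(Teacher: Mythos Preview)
Your argument is correct and follows exactly the paper's approach: assume $-A\in\mathbf{K}_{+}$, apply Lemma~\ref{lemma:A+C} to get $A+(-A)=O\in\mathbf{K}_{+}$, and reach a contradiction. Your version is in fact slightly more careful than the paper's, since you also explicitly exclude the case $-A=0_{\mathbf{K}}$ before invoking the trichotomy, whereas the paper leaves that step implicit.
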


\begin{proof}
If $-A\in \mathbf{K}_{+}\Longrightarrow A+(-A)=O(=0_{K})\in \mathbf{K}%
_{+},$ which is a contradiction.
\end{proof}

\begin{lemma}\label{lemma:A*C}
For all $A,C\in \mathbf{K}_{+}\Longrightarrow A\ast C\in \mathbf{K}_{+}.$
\end{lemma}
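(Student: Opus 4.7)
The plan is to mirror the strategy used for Lemma~\ref{lemma:A+C}, but with a central dilation playing the role of the translation. I would realize $A \ast C$ as the image of $A$ under a map of $\ell$ to itself that fixes $O$ and sends $I$ to $C$, and then deduce $A \ast C \in \mathbf{K}_{+}$ by arguing that this map preserves the relation ``same side of $O$''. The key trick is that, although a single parallel projection may either preserve or reverse betweenness, the relation ``same side of a fixed point'' is invariant under both.

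First, I would recall from \cite{3},\cite{4} the construction of $A \ast C$: given $A,C\in \ell$, choose an auxiliary point $B\notin \ell$, let $D=\ell_{IB}^{A}\cap \ell^{OB}$ (the line through $A$ parallel to $\ell^{IB}$, meeting $\ell^{OB}$), and set $A\ast C=\ell_{BC}^{D}\cap \ell$. This exhibits $A\ast C=(\pi_{2}\circ \pi_{1})(A)$, where $\pi_{1}\colon \ell\to \ell^{OB}$ is the parallel projection in direction $\ell^{IB}$, with $\pi_{1}(O)=O$, $\pi_{1}(I)=B$, $\pi_{1}(A)=D$, and $\pi_{2}\colon \ell^{OB}\to \ell$ is the parallel projection in direction $\ell^{BC}$, with $\pi_{2}(O)=O$, $\pi_{2}(B)=C$, $\pi_{2}(D)=A\ast C$. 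Both projections fix $O$ because $O$ lies on $\ell\cap \ell^{OB}$.

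Next, by condition 2 of Definition~\ref{Ordered.D.A.Plane}, each $\pi_{i}$ either preserves or reverses betweenness. The key observation is that any bijection between two lines which fixes $O$ and either preserves or reverses betweenness preserves the relation ``$X$ and $Y$ lie on the same side of $O$'', since by axiom \textbf{Lo.1} this relation is just the negation of $[X,O,Y]$, which is insensitive to order-reversal. Hence both $\pi_{1}$ and $\pi_{2}$ preserve ``same side of $O$'', and therefore so does $\pi_{2}\circ \pi_{1}$.

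Finally, since $A\in \mathbf{K}_{+}$, the points $A$ and $I$ lie on the same side of $O$, so $A\ast C=(\pi_{2}\circ \pi_{1})(A)$ and $C=(\pi_{2}\circ \pi_{1})(I)$ lie on the same side of $O$ on $\ell$. Since $C\in \mathbf{K}_{+}$, $C$ and $I$ lie on the same side of $O$, and by the ``same side'' transitivity already established (Proposition following Definition preceding Lemma~\ref{lemma:A+C}), $A\ast C$ and $I$ lie on the same side of $O$. Because $\mathbf{K}$ is a skew-field with no zero divisors and $A,C\neq O$, we have $A\ast C\neq O$, whence $A\ast C\in \mathbf{K}_{+}$. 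The main obstacle I expect is matching the construction from \cite{3},\cite{4} to the two-step parallel-projection description (so that both projections legitimately fix $O$), and giving a crisp justification that ``same side of $O$'' is invariant under both betweenness-preservation and betweenness-reversal; once this is in place, the rest reduces to a direct application of the axioms \textbf{Lo.1}--\textbf{Lo.3}.
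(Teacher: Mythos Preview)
Your proposal is correct and shares the paper's core idea: realize $A\ast C$ as $(\pi_2\circ\pi_1)(A)$ where $\pi_1\colon\ell\to\ell^{OB}$ is the parallel projection in direction $\ell^{IB}$ and $\pi_2\colon\ell^{OB}\to\ell$ is the parallel projection in direction $\ell^{BC}$, both fixing $O$. The paper then does a case split on the relative position of $I$ with respect to $A$ and $C$ (the three sub-cases $[O,I,A]$, $[A,I,C]$, $[A,C,I]$), in each case pushing an explicit betweenness triple through $P_p$ and $\widetilde{P_p}$ under the tacit assumption that these projections \emph{preserve} order, to land on a triple of the form $[O,\ldots,A\ast C]$.

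Your route differs in one genuinely useful way: instead of tracking specific betweenness triples through cases, you observe that the relation ``same side of $O$'' is invariant under any bijection that fixes $O$ and either preserves \emph{or reverses} betweenness (via \textbf{Lo.1}), and then finish with the transitivity of ``same side'' and the absence of zero divisors. This buys you two things: the case analysis on the position of $I$ disappears entirely, and you honestly accommodate condition~2 of Definition~\ref{Ordered.D.A.Plane}, which only guarantees that a parallel projection preserves \emph{or} reverses order, a possibility the paper's argument silently ignores. The paper's version is more concrete (you see the actual triples move), but yours is both shorter and more robust.
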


\begin{proof}
Let's have the points $A,C\in \ell ,$ such that $[O,A,C]$ or 
$[O,C,A].$ Suppose we have true $\left[ O,A,C\right] ,$and suppose also that
we have $[O,I,A]$ \ from the construction of the $A\ast C$ point we have
that $A\ast C$ $\in \ell $, which was built according to the algorithm

\[
\forall A,C\in \ell ,\left[ 
\begin{array}{l}
\mathbf{Step.1}.\exists B\notin OI \\ 
\mbox{} \\
\mathbf{Step.2}.\ell _{IB}^{A}\cap OB=E \\ 
\mbox{} \\ 
\mathbf{Step.3}.\ell _{BC}^{E}\cap OI=F%
\end{array}%
\right] \Leftrightarrow A\ast C=F.
\]

By, construction of the point $A\ast C$, we have parallelisms

\[
IB||AE,BC||E(A\ast C).
\]

We take a parallel projection $P_{p}$ with the direction of the line $\ell^{IB}$, (see Fig.~\ref{fig:figure.5}) such that

\[
P_{p}:\ell ^{OI}\rightarrow \ell ^{OB},
P_{p}(O)=O;P_{p}(I)=B 
\]

and $P_{p}(A)=E$, since, parallel projection, preserve the order, we have

\begin{figure}[!ht]
\centering
\begin{pspicture}
(0.5,0.)(10.,6.5)
\psline[linewidth=2.pt](1.,1.)(10.,1.)
\psline[linewidth=2.pt](1.,1.)(6.,6.)
\psline[linewidth=1.2pt](3.,1.)(3.,3.)
\psline[linewidth=1.2pt](4.,1.)(4.,4.)
\psline[linewidth=1.2pt,linecolor=blue](3.,3.)(6.,1.)
\psline[linewidth=1.2pt,linecolor=blue](4.,4.)(8.42,1.)
%\rput[tl](2.5,1.){$I$}
%\rput[tl](4.,1.){$A$}
%\rput[tl](5.7,1.){$C$}
%\rput[tl](7.9,1.){$A*C$}
%\rput[tl](2.1,3.35){$B$}
%\rput[tl](3.22,4.22){$E$}
\rput[tl](2.5,0.9){$I$}
\rput[tl](4.,0.9){$A$}
\rput[tl](5.7,0.9){$C$}
\rput[tl](7.9,0.9){$A*C$}
\rput[tl](2.35,3.1){$B$}
\rput[tl](3.2,4.1){$E$}
\psline[linewidth=1.pt,linestyle=dotted,linecolor=red]{->}(1.76,3.86)(4.564615384615385,1.9569230769230768)
\psline[linewidth=1.pt,linestyle=dotted,linecolor=red]{->}(3.,0.)(3.,2.)
\psline[linewidth=1.pt,linestyle=dotted,linecolor=red]{->}(4.,0.)(4.,2.)
\rput[tl](0.82,0.8){$O$}
\psline[linewidth=1.2pt,linestyle=dotted,linecolor=red]{->}(2.82,4.8)(5.891143662129771,2.716418328871196)
%\rput[tl](3.15,0.7){$P_p$}
%\rput[tl](2.5,4.5){$\widetilde{P_{p}} $}
%\rput[tl](4.8,5.){$\ell^{OB}$}
%\rput[tl](8.9,1.8){$\ell^{OI}$}
%\rput[tl](2.2,1.9){$\ell^{IB}$}
%\rput[tl](5.2,2.14){$\ell^{BC}$}
%\rput[tl](4.12,3.){$\ell^{A}_{\ell^{IB}}$}
%\rput[tl](3.56,5.06){$\ell^{E}_{\ell^{BC}}$}
\rput[tl](3.2,0.7){$P_p$}
\rput[tl](2.5,4.5){$\widetilde{P_{p}} $}
\rput[tl](4.7,4.7){$\ell^{OB}$}
\rput[tl](8.9,1.7){$\ell^{OI}$}
\rput[tl](2.3,1.9){$\ell^{IB}$}
\rput[tl](5.2,2.14){$\ell^{BC}$}
\rput[tl](4.12,3.){$\ell^{A}_{\ell^{IB}}$}
\rput[tl](3.56,5.06){$\ell^{E}_{\ell^{BC}}$}
\begin{scriptsize}
%\psdots[dotstyle=*,linecolor=blue](1.,1.)
%\psdots[dotstyle=*,linecolor=blue](3.,1.)
\psdots[dotstyle=o, linewidth=1.2pt,linecolor = black, fillcolor = blue]%
(1.,1.)(3.,1.)(4.,1.)(6.,1.)
\psdots[dotstyle=o, linewidth=1.2pt,linecolor = black, fillcolor = red]%
(8.42,1.)
%\psdots[dotstyle=*,linecolor=red](3.,3.)
\psdots[dotstyle=o, linewidth=1.2pt,linecolor = black, fillcolor = green]%
(3.,3.)(4.,4.)
%\psdots[dotstyle=*,linecolor=blue](4.,1.)
%\psdots[dotstyle=*,linecolor=red](4.,4.)
%\psdots[dotstyle=*,linecolor=blue](6.,1.)
%\psdots[dotstyle=*,linecolor=blue](8.42,1.)
\end{scriptsize}
\end{pspicture}
\caption{The multiplication of points in
a line of Desargues affine plane, as a composition of tow parallel
projections.}
\label{fig:figure.5}
\end{figure}

\[
\left. 
\begin{array}{c}
\lbrack O,I,A] \\ 
IB\parallel AE%
\end{array}%
\right\} \Longrightarrow \left[ P_{p}(O),P_{p}(I),P_{p}(A)\right] =[O,B,E]
\]

But by the multiplication algorithm, during the construction of the $A\ast C$
point, we have $BC||E(A\ast C)$.

We take a parallel projection $\widetilde{P_{p}}$ with the direction of the
line $\ell ^{BC}$, (see Fig.~\ref{fig:figure.5}) such that

\[
\widetilde{P_{p}}:\ell ^{OB}\rightarrow \ell ^{OI},
\widetilde{P_{p}}(O)=O;\widetilde{P_{p}}(B)=C \text{ }and\text{ } \widetilde{P_{p}}(E)=A\ast C.
\]

since, parallel projection, preserve the order, we have

\[
\left. 
\begin{array}{c}
[O,B,E] \\ 
BC||E(A\ast C)%
\end{array}%
\right\} \Longrightarrow \left[ \widetilde{P_{p}}(O),\widetilde{P_{p}}(B),%
\widetilde{P_{p}}(E)\right] =[O,C,A\ast C]
\]

By, $\left[ O,C,A\ast C\right] ,$ we have that

\[
A\ast C\in K_{+}(=\ell _{+}).
\]

If we have, $[O,A,C]$ and $[A,I,C]\Longrightarrow \lbrack O,A,I],$ (see Fig.~\ref{fig:figure.6}). By,
construction of the point $A\ast C$, we have parallelisms

\[
IB||AE,BC||E(A\ast C).
\]

We take a parallel projection $P_{p}$ with the direction of the line $\ell
^{IB}$, such that

\[
P_{p}:\ell ^{OI}\rightarrow \ell ^{OB},
P_{p}(O)=O; P_{p}(A)=E\ \mbox{and}\ P_{p}(I)=B.
\]

since, parallel projection, preserve the order, we have 
\[
[O,A,I]\Longrightarrow \lbrack P_{p}(O),P_{p}(A),P_{p}(I)]\Longrightarrow
\lbrack O,E,B].
\]

\begin{figure}[!ht]
\centering
\begin{pspicture}
(0.5,0.)(10.,6.5)
\psline[linewidth=2.pt](1.,1.)(10.,1.)
\psline[linewidth=2.pt](1.,1.)(6.,6.)
\psline[linewidth=1.2pt](3.,1.)(3.,3.)
\psline[linewidth=1.2pt](4.,1.)(4.,4.)
\psline[linewidth=1.2pt,linecolor=blue](3.,3.)(6.,1.)
\psline[linewidth=1.2pt,linecolor=blue](4.,4.)(8.42,1.)
\rput[tl](4.24,0.9){$I$}
\rput[tl](2.46,0.9){$A$}
\rput[tl](8.2,0.9){$C$}
\rput[tl](5.4,0.9){$A*C$}
\rput[tl](3.7,4.7){$B$}
\rput[tl](2.22,3.3){$E$}
\psline[linewidth=1.2pt,linestyle=dotted,linecolor=red]{->}(1.76,3.86)(4.564615384615385,1.9569230769230768)
\psline[linewidth=1.2pt,linestyle=dotted,linecolor=red]{->}(3.,0.)(3.,2.)
\psline[linewidth=1.2pt,linestyle=dotted,linecolor=red]{->}(4.,0.)(4.,2.)
\rput[tl](0.7,0.7){$O$}
\psline[linewidth=1.2pt,linestyle=dotted,linecolor=red]{->}(2.82,4.8)(5.891143662129771,2.716418328871196)
\rput[tl](3.2,0.7){$P_p$}
\rput[tl](2.6,4.3){$\widetilde{P_{p}}$}
\rput[tl](4.8,4.8){$\ell^{OB}$}
\rput[tl](8.98,1.5){$\ell^{OI}$}
\rput[tl](3.25,3.4){$\ell^{IB}$}
\rput[tl](6.4,3.){$\ell^{BC}$}
\rput[tl](2.1,2.2){$\ell^{A}_{\ell^{IB}}$}
\rput[tl](4.76,2.7){$\ell^{E}_{\ell^{BC}}$}
\begin{scriptsize}
%\psdots[dotstyle=*,linecolor=blue](1.,1.)
%\psdots[dotstyle=*,linecolor=blue](3.,1.)
\psdots[dotstyle=o, linewidth=1.2pt,linecolor = black, fillcolor = blue]%
(1.,1.)(3.,1.)(4.,1.)(6.,1.)
%\psdots[dotstyle=*,linecolor=red](3.,3.)
\psdots[dotstyle=o, linewidth=1.2pt,linecolor = black, fillcolor = green]%
(3.,3.)(4.,4.)
\psdots[dotstyle=o, linewidth=1.2pt,linecolor = black, fillcolor = red]%
(8.42,1.)
%\psdots[dotstyle=*,linecolor=blue](4.,1.)
%\psdots[dotstyle=*,linecolor=red](4.,4.)
%\psdots[dotstyle=*,linecolor=blue](6.,1.)
%\psdots[dotstyle=*,linecolor=blue](8.42,1.)
\end{scriptsize}
\end{pspicture}
\caption{The case where, we have, $%
[O,A,C]$ and $[A,I,C].$}
\label{fig:figure.6}
\end{figure}

But by the multiplication algorithm, during the construction of the $A\ast C$
point, we have $BC||E(A\ast C)$.We take a parallel projection $\widetilde{P_{p}}$ 
with the direction of the line $\ell ^{BC}$, such that

\[
\widetilde{P_{p}}:\ell ^{OB}\rightarrow \ell ^{OI},
\widetilde{P_{p}}(O)=O;\widetilde{P_{p}}(E)=A\ast C\ \mbox{and}\ \widetilde{P_{p}}%
(B)=C.
\]

since, parallel projection, preserve the order, we have

\[
[O,E,B]\Longrightarrow \left[ \widetilde{P_{p}}(O),\widetilde{P_{p}}(E),%
\widetilde{P_{p}}(B)\right] =[O,A\ast C,C]\Longrightarrow A\ast C\in
K_{+}(=\ell _{+}).
\]

If we have, $[O,A,C]$ and $[A,C,I]\Longrightarrow [O,A,I]\wedge
[O,C,I],$ (see Fig.~\ref{fig:figure.7}). By, construction of the point $A\ast C$, we have
parallelisms

\[
IB||AE,BC||E(A\ast C).
\]

We take a parallel projection $P_{p}$ with the direction of the line $\ell
^{IB}$, such that

\[
P_{p}:\ell ^{OI}\rightarrow \ell ^{OB},
P_{p}(O)=O; P_{p}(A)=E\ \mbox{and}\ P_{p}(I)=B.
\]

since, parallel projection, preserve the order, we have 
\[
[O,A,I]\Longrightarrow \lbrack P_{p}(O),P_{p}(A),P_{p}(I)]\Longrightarrow
\lbrack O,E,B].
\]

\begin{figure}[!ht]
\centering
\begin{pspicture}
(0.5,0.)(9.,5.)
\psline[linewidth=2.pt](1.,1.)(10.,1.)
\psline[linewidth=2.pt](1.,1.)(8.06,4.56)
\psline[linewidth=1.2pt](2.8,1.)(4.0189966281279395,2.522326911633918)
\psline[linewidth=1.2pt](4.76,1.)(6.989750020794277,4.0203272059529205)
\psline[linewidth=1.2pt,linecolor=blue](4.0189966281279395,2.522326911633918)(4.,1.)
\psline[linewidth=1.2pt,linecolor=blue](6.989750020794277,4.0203272059529205)(7.,1.)
\rput[tl](7.1,0.9){$I$}
\rput[tl](3.42,0.9){$A$}
\rput[tl](4.54,0.9){$C$}
\rput[tl](2.2,0.9){$A*C$}
\rput[tl](6.38,4.5){$B$}
\rput[tl](3.44,3.12){$E$}
\psline[linewidth=1.2pt,linestyle=dotted,linecolor=red]{->}(4.,0.)(4.009089155919676,1.7283748761817668)
\psline[linewidth=1.2pt,linestyle=dotted,linecolor=red]{->}(5.04,3.78)(3.4094983140639696,1.7611634558169587)
\rput[tl](0.7,0.9){$O$}
\psline[linewidth=1.2pt,linestyle=dotted,linecolor=red]{->}(7.,0.)(6.9941355826069165,2.7280483251619736)
\rput[tl](5.38,0.6){$P_p$}
\rput[tl](5.7,4.82){$\widetilde{P_{p}} $}
\rput[tl](7.66,4.24){$\ell^{OB}$}
\rput[tl](8.1,1.72){$\ell^{OI}$}
\rput[tl](7.3,2.66){$\ell^{IB}$}
\rput[tl](5.94,2.76){$\ell^{BC}$}
\rput[tl](4.16,2.6){$\ell^{A}_{\ell^{IB}}$}
\rput[tl](4.06,4.18){$\ell^{E}_{\ell^{BC}}$}
\psline[linewidth=1.2pt,linestyle=dotted,linecolor=red]{->}(7.66,4.94)(6.319248334858248,3.15149066176988)
\begin{scriptsize}
%\psdots[dotstyle=*,linecolor=blue](1.,1.)
%\psdots[dotstyle=*,linecolor=blue](2.8,1.)
\psdots[dotstyle=o, linewidth=1.2pt,linecolor = black, fillcolor = blue]%
(1.,1.)(2.8,1.)(4.76,1.)(4.,1.)
%\psdots[dotstyle=*,linecolor=red](4.0189966281279395,2.522326911633918)
%\psdots[dotstyle=*,linecolor=blue](4.76,1.)
%\psdots[dotstyle=*,linecolor=red](6.989750020794277,4.0203272059529205)
\psdots[dotstyle=o, linewidth=1.2pt,linecolor = black, fillcolor = green]%
(4.0189966281279395,2.522326911633918)(6.989750020794277,4.0203272059529205)
%\psdots[dotstyle=*,linecolor=blue](4.,1.)
%\psdots[dotstyle=*,linecolor=blue](7.,1.)
\psdots[dotstyle=o, linewidth=1.2pt,linecolor = black, fillcolor = blue]%
(7.,1.)
\end{scriptsize}
\end{pspicture}
\caption{The case wherewe have, $[O,A,C]$
and $[A,C,I].$}
\label{fig:figure.7}
\end{figure}

But by the multiplication algorithm, during the construction of the $A\ast C$
point, we have $BC||E(A\ast C)$.We take a parallel projection $\widetilde{%
P_{p}}$ with the direction of the line $\ell ^{BC}$, such that

\[
\widetilde{P_{p}}:\ell ^{OB}\rightarrow \ell ^{OI},
\widetilde{P_{p}}(O)=O;\widetilde{P_{p}}(E)=A\ast C;  %
\widetilde{P_{p}}(B)=C,
\]

and $\widetilde{P_{p}}$, since parallel projection preserves the order, we have

\[
[O,E,B]\Longrightarrow \left[ \widetilde{P_{p}}(O),\widetilde{P_{p}}(E),%
\widetilde{P_{p}}(B)\right] =[O,A\ast C,C]\Longrightarrow A\ast C\in
K_{+}(=\ell _{+}).
\]

\end{proof}

We have indirectly from Lemma~\ref{lemma:A*C} the case when $[O,C,A]$,  since $A\ast C\neq C\ast A.$

From Lemmas~\ref{lemma:A+C} and ~\ref{lemma:A*C}, we have the following result.

\begin{theorem}\label{thm:OrderedSkewFieldDesargues}
\bigskip Every skew-field that is constructed over an ordered-line of a Desargues affine plane is an ordered skew-field.
\end{theorem}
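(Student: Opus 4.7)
The plan is to verify the three defining conditions of an ordered skew-field for $\mathbf{K}=(\ell,+,\ast)$ by invoking the work already done in this section. The structure $(\ell,+,\ast)$ is a skew-field by the constructions of \cite{3} and \cite{4}, the subsets $\mathbf{K}_+,\mathbf{K}_-$ and the zero element $0_{\mathbf{K}}=O$ have already been specified via the betweenness relations on $\ell$, so nothing remains but to check the three axioms of the ordered-skew-field definition against what has been proved.

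First I would verify condition (1), namely the partition $\mathbf{K}=\mathbf{K}_-\cup\{0_{\mathbf{K}}\}\cup\mathbf{K}_+$ with pairwise empty intersection. For any $X\in\ell\setminus\{O,I\}$, the triple $O,I,X$ consists of three mutually distinct collinear points, so axiom Lo.2 forces exactly one of $[O,I,X]$, $[I,X,O]$, $[X,O,I]$ to hold. Using Lo.1 to rewrite $[I,X,O]$ as $[O,X,I]$, the first two configurations place $X$ in $\mathbf{K}_+$ and the third places $X$ in $\mathbf{K}_-$. The element $I$ itself is assigned to $\mathbf{K}_+$ by the boundary clause in its defining disjunction, and $O=0_{\mathbf{K}}$ belongs to neither $\mathbf{K}_+$ nor $\mathbf{K}_-$ since in both defining conditions $O$ occurs as an endpoint or separator rather than as the middle symbol satisfying the betweenness predicate. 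Disjointness of the three pieces then follows directly from the exclusivity in Lo.2.

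Conditions (2) and (3) of the definition are exactly the contents of Lemma~\ref{lemma:A+C} and Lemma~\ref{lemma:A*C}, respectively: closure of $\mathbf{K}_+$ under $+$ was reduced there to the fact that the translation sending $O$ to $A$ preserves line order (via Theorem~\ref{thm:orderInALine}), and closure under $\ast$ was reduced to the fact that parallel projection between lines preserves order (by clause 2 of Definition~\ref{Ordered.D.A.Plane}), with the multiplicative case requiring a split according to whether $I$ lies outside $[O,A]\cup[O,C]$, between $A$ and $C$, or to the right of them both.

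Assembling these pieces finishes the proof. I do not expect any genuine obstacle at this final step: the substance of the theorem resides inside the two lemmas, and the theorem is simply the repackaging of those closure statements together with the trivial partition observation into the axiomatic form of an ordered skew-field. The only delicate point is the bookkeeping for the boundary element $I$ in the definition of $\mathbf{K}_+$, which is handled by the disjunctive phrasing $[O,X,I]\ \text{or}\ [O,I,X]$ already chosen in the paper.
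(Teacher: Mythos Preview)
Your proposal is correct and matches the paper's own treatment: the paper states the theorem as an immediate consequence of Lemma~\ref{lemma:A+C} and Lemma~\ref{lemma:A*C}, with the partition condition~(1) already recorded in the text preceding those lemmas. You have simply made the verification of condition~(1) via axioms Lo.1 and Lo.2 explicit, which the paper absorbs into its remark that the decomposition follows ``clearly, from the definition of the ordered line''; otherwise the argument is identical.
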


If we consider the special case where ordered-line of a Desargues affine plane in $\mathbb{R}^2$, we obtain the following result.

\begin{corollary}\label{cor:finiteOrderedSkewField}
\bigskip Every finite skew-field that is constructed over an ordered-line of a Desargues affine plane in $\mathbb{R}^2$ is a finite ordered skew-field.
\end{corollary}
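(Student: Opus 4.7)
The plan is to obtain the corollary as a direct specialization of Theorem~\ref{thm:OrderedSkewFieldDesargues}, using Theorem~\ref{thm:finiteOrderedDesarguesianAffinePlane} to handle the ``finite plane in $\mathbb{R}^2$'' hypothesis. First I would fix a finite Desarguesian affine plane $\pi \subseteq \mathbb{R}^2$ and invoke Theorem~\ref{thm:finiteOrderedDesarguesianAffinePlane} to conclude that $\pi$ is in fact a finite \emph{ordered} Desarguesian (Pappian) affine plane; by Definition~\ref{Ordered.D.A.Plane} this means every line of $\pi$ is an ordered line in the sense of Axioms Lo.1--Lo.4, and every parallel projection between lines preserves or reverses betweenness.

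Next I would pick a line $\ell \subseteq \pi$, distinguish two points $O,I \in \ell$ as in the previous sections, and form the skew-field $\mathbf{K} = (\ell, +, \ast)$ constructed point-wise along $\ell$ as recalled from \cite{3, 4}. The partition
\[
\mathbf{K} = \mathbf{K}_- \cup \{0_{\mathbf{K}}\} \cup \mathbf{K}_+,
\qquad \mathbf{K}_- \cap \{0_{\mathbf{K}}\} \cap \mathbf{K}_+ = \varnothing,
\]
is the one induced by the order on $\ell$ as already set up just before Lemma~\ref{lemma:A+C}. Since $\ell$ is an ordered line (by the previous paragraph), Theorem~\ref{thm:OrderedSkewFieldDesargues} applies and yields that $\mathbf{K}$ satisfies the three conditions in the definition of ordered skew-field: closure of $\mathbf{K}_+$ under $+$ comes from Lemma~\ref{lemma:A+C}, and closure of $\mathbf{K}_+$ under $\ast$ from Lemma~\ref{lemma:A*C}.

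Finally, since $\pi$ is a finite affine plane, the point-set of $\ell$ is finite, and the underlying set of $\mathbf{K}$ is precisely $\ell$. Hence $\mathbf{K}$ is a finite skew-field, and combining this with the orderedness established in the previous step gives the claim. There is essentially no obstacle here beyond bookkeeping: every ingredient (ordered line structure, the $(+,\ast)$-construction, closure of $\mathbf{K}_+$ under both operations, and finiteness of $\ell$) has been assembled earlier, so the corollary is the straightforward specialization of Theorem~\ref{thm:OrderedSkewFieldDesargues} to the finite Euclidean setting. The only delicate point worth mentioning explicitly is that the finiteness hypothesis is transferred from the plane $\pi$ to the skew-field $\mathbf{K}$ via the identification of $\mathbf{K}$ with the point-set of $\ell$.
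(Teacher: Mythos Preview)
Your proposal is correct and arrives at the same conclusion by the same core mechanism as the paper, namely a direct specialization of Theorem~\ref{thm:OrderedSkewFieldDesargues}; indeed the paper gives no separate proof at all, introducing the corollary only with the sentence ``If we consider the special case where ordered-line of a Desargues affine plane in $\mathbb{R}^2$, we obtain the following result.''

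That said, you do more work than is required. The hypothesis of the corollary already stipulates that the line is an \emph{ordered} line and that the skew-field is \emph{finite}, so your appeal to Theorem~\ref{thm:finiteOrderedDesarguesianAffinePlane} to manufacture an ordered structure on $\ell$, and your derivation of finiteness of $\mathbf{K}$ from finiteness of $\pi$, are both unnecessary (and in fact you silently assume the plane is finite, whereas the statement only assumes the skew-field is). None of this is wrong, but the paper's implicit argument is simply: the line is ordered by hypothesis, so Theorem~\ref{thm:OrderedSkewFieldDesargues} gives an ordered skew-field, and it is finite by hypothesis.
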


Putting together the result from Corollary~\ref{cor:finiteOrderedSkewField} and Theorem~\ref{thm:finiteOrderedDesarguesianAffinePlane}, we obtain the following result for an ordered lines o a finite Desargues affine plane in $\mathbb{R}^2$.

\begin{theorem}\label{cor:finiteOrderedSkewFieldR2}
A finite skew field constructed over an ordered-line on a finite Desargues affine plane in $\mathbb{R}^2$ is Pappian.
\end{theorem}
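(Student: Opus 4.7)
The plan is to assemble the conclusion by stitching together the two results identified just before the theorem, and then to invoke the standard coordinatization correspondence between Desarguesian affine planes and their coordinate skew-fields. Concretely, let $\pi$ denote the ambient finite Desarguesian affine plane in $\mathbb{R}^2$, let $\ell$ be an ordered line of $\pi$ chosen for the construction, and let $\mathbf{K}=(\ell,+,\ast)$ be the skew-field obtained from $\ell$ as in Section~4. The assertion ``$\mathbf{K}$ is Pappian'' I would interpret, following the discussion surrounding Theorem~\ref{thm:Pappus} and the Affine Pappus Condition, as the statement that the affine plane coordinatized by $\mathbf{K}$ (namely $\pi$ itself) satisfies the Pappus condition.

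The steps I would carry out, in order, are as follows. First, I would apply Corollary~\ref{cor:finiteOrderedSkewField} directly to $\ell$ and $\mathbf{K}$, which yields that $\mathbf{K}$ is a \emph{finite ordered} skew-field, with positive cone $\mathbf{K}_{+}$ and negative cone $\mathbf{K}_{-}$ defined via the betweenness relation on $\ell$ inherited from Definition~\ref{def:orderAxioms}. Second, since every line in $\pi$ satisfies the axioms Lo.1--Lo.4 (the hypothesis of the preceding section is symmetric in the choice of line, which is the point of the parallel-projection argument in Theorem~\ref{thm:orderInALine}), the plane $\pi$ meets the conditions of Definition~\ref{Ordered.D.A.Plane} and is therefore a finite ordered Desarguesian affine plane over $\mathbb{R}^2$. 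Third, I would invoke Theorem~\ref{thm:finiteOrderedDesarguesianAffinePlane} to conclude that $\pi$ is Pappian. Finally, because $\mathbf{K}$ is the coordinate skew-field of $\pi$ in the sense of the construction in \cite{3,4}, the standard equivalence between the Pappus condition on a Desarguesian affine plane and commutativity of its coordinate skew-field (cf.\ Theorem~\ref{thm:Tecklenburg}, Wedderburn~\cite{Wedderburn1905DesarguesianAffinePlane,Wedderburn1987PappusDesarguesianAffinePlanes}) transfers the Pappian property of $\pi$ to $\mathbf{K}$.

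The main obstacle I anticipate is not algebraic but a matter of bookkeeping: it is the verification that the ``Pappian'' property really does pass between $\pi$ and $\mathbf{K}$ in the precise form the statement requires. Two routes are available. The short route uses Wedderburn's little theorem, which forces any finite skew-field to be commutative, and hence $\mathbf{K}$ is a field; combined with the fact (already used in Theorem~\ref{thm:finiteOrderedDesarguesianAffinePlane}) that every finite Desarguesian affine plane is Pappian, this gives the conclusion with essentially no further work. The longer route, which is more in the constructive spirit of the rest of the paper, would verify the Affine Pappus Condition of Lane directly on $\pi$ using the ordered-line machinery of Section~3 and the parallel-projection arguments already used in Lemmas~\ref{lemma:A+C} and~\ref{lemma:A*C}, and then translate that geometric statement into commutativity of $\ast$ through the multiplication algorithm of Section~4. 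I would present the short route as the official proof and remark that Corollary~\ref{cor:finiteOrderedSkewField} together with Theorem~\ref{thm:finiteOrderedDesarguesianAffinePlane} already forces the conclusion, so the theorem may be viewed as a direct corollary of those two results.
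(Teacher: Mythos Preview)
Your proposal is correct and follows essentially the same route as the paper: both proofs combine Corollary~\ref{cor:finiteOrderedSkewField} with Theorem~\ref{thm:finiteOrderedDesarguesianAffinePlane} to obtain the Pappian conclusion. The only cosmetic difference is that the paper applies Theorem~\ref{thm:finiteOrderedDesarguesianAffinePlane} directly to $\ell$ viewed as a ``Desarguesian affine 1-plane,'' whereas you (more carefully) apply it to the ambient plane $\pi$ and then transfer the Pappian property to $\mathbf{K}$ via the coordinatization correspondence.
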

\begin{proof}
Let $\ell$ be an ordered-line in a finite Desargues affine plane in $\mathbb{R}^2$.   By definition, $\ell$ is a Desarguesian affine 1-plane.   From Corollary~\ref{cor:finiteOrderedSkewField}, $\ell$ is a finite ordered skew-field.
Hence, from Theorem~\ref{thm:finiteOrderedDesarguesianAffinePlane}, $\ell$ is Pappian.
\end{proof}

\bibliographystyle{amsplain}
\bibliography{NSrefs}

\end{document}